\documentclass[11pt]{article}
\usepackage[a4paper,margin=1in]{geometry}
\usepackage{amsmath,amssymb,amsthm,mathtools}
\usepackage{enumitem}
\usepackage{microtype}
\usepackage{indentfirst}
\usepackage[numbers,sort&compress]{natbib}
\usepackage[hidelinks]{hyperref}
\hypersetup{
  pdftitle={The A-alpha-Spectral Turan Problem for s-Clique Tensors of K-(r+1)-Free Graphs},
  pdfauthor={Xiaoqi Liu and Haiying Shan},
  pdfsubject={Spectral extremal problems for clique tensors},
  pdfkeywords={alpha-s-clique spectral radius,
    spectral Turan problem, complete multipartite graph, Zykov symmetrization}
}
\newtheorem{theorem}{Theorem}[section]
\newtheorem{lemma}[theorem]{Lemma}
\newtheorem{proposition}[theorem]{Proposition}
\newtheorem{corollary}[theorem]{Corollary}
\theoremstyle{definition}

\newtheorem{problem}[theorem]{Open Problem}
\theoremstyle{remark}

\newcommand{\Cs}{\mathcal C_s}
\newcommand{\As}{\mathcal A_s}
\newcommand{\Ds}{\mathcal D_s}
\newcommand{\Aas}{\mathcal A_{\alpha,s}}
\newcommand{\rhoas}{\rho_{\alpha,s}}

\title{The \(\mathcal A_\alpha\)-Spectral Tur\'an Problem for
\(s\)-Clique Tensors of \(K_{r+1}\)-Free Graphs}
\author{Xiaoqi Liu, \, Haiying Shan\thanks{E-mail:
\texttt{2111152@tongji.edu.cn} (X.~Liu),
\texttt{shan\_haiying@tongji.edu.cn} (H.~Shan, corresponding author).}
\\ \small School of Mathematical Sciences, Key Laboratory of Intelligent
Computing and Applications\\ \small (Ministry of Education), Tongji
University, Shanghai 200092, China}
\date{}

\begin{document}
\maketitle

\begin{abstract}
For fixed $r\geq3$ and $0<\alpha<1$, we study the maximum
$\alpha$-$s$-clique spectral radius among $n$-vertex $K_{r+1}$-free graphs. Here
$\mathcal A_{\alpha,s}(G)=\alpha\mathcal D_s(G)+(1-\alpha)\mathcal A_s(G)$,
where $\mathcal A_s(G)$ and $\mathcal D_s(G)$ are the adjacency and degree
tensors of the $s$-clique hypergraph of $G$.
For $s=r$, put $c_r=((r-1)/r)^{r-1}$. We prove that, for all sufficiently
large $n$, the unique extremal graph is the Tur\'an graph $T_r(n)$
if $\alpha\leq c_r$, and $K_1\vee T_{r-1}(n-1)$ if $\alpha>c_r$.
For general $3\leq s\leq r$ and $n\geq r$, every extremal graph is
complete $r$-partite. The asymptotic maximum
$\alpha$-$r$-clique spectral radius
among $n$-vertex $K_{r+1}$-free graphs is
$\max\{r^{1-r},\alpha(r-1)^{1-r}\}n^{r-1}+o(n^{r-1})$.
In every asymptotically extremal sequence of complete $r$-partite graphs,
all parts have size $n/r+o(n)$ when $\alpha<c_r$; when $\alpha>c_r$,
one part has size $o(n)$ and the remaining parts have size
$n/(r-1)+o(n)$. At $\alpha=c_r$, both part-size patterns are asymptotically extremal.
\end{abstract}

\medskip
\noindent\textbf{Keywords:}
$\alpha$-$s$-clique spectral radius; spectral Tur\'an problem;
complete multipartite graph; Zykov symmetrization.

\medskip
\noindent\textbf{2020 Mathematics Subject Classification:}
05C35; 05C50; 15A18.

\section{Introduction}

Mantel~\cite{Mantel} determined the maximum number of edges in a
triangle-free graph. Tur\'an~\cite{Turan} extended this result to
$K_{r+1}$-free graphs, with the Tur\'an graph $T_r(n)$
as the unique extremal graph. For $n\geq r$, $T_r(n)$ is the complete
$r$-partite graph on $n$ vertices whose part sizes differ by at most one.
Generalized Tur\'an problems count copies of
a fixed graph instead of edges. Zykov~\cite{Zykov} and
Erd\H{o}s~\cite{ErdosCliques} determined the maximum number of $s$-cliques in
a $K_{r+1}$-free graph for $2\leq s\leq r$; Alon and
Shikhelman~\cite{AlonShikhelman} studied the problem for general pairs of
graphs.

Spectral extremal graph theory replaces the number of edges by the
adjacency spectral radius. Nosal~\cite{Nosal} proved the spectral Mantel
theorem, and Nikiforov~\cite{Nikiforov2007} extended it to $K_{r+1}$-free
graphs. Bollob\'as and Nikiforov~\cite{BollobasNikiforov} obtained spectral
bounds involving clique counts.

The tensor approach extends spectral graph theory to uniform hypergraphs.
Qi~\cite{Qi2005} and Lim~\cite{Lim2005} introduced tensor eigenvalues.
Cooper and Dutle~\cite{CooperDutle} defined the adjacency tensor of a uniform
hypergraph. Liu and Bu~\cite{LiuBu} applied this construction to the
$s$-cliques of a graph. Yu and Peng~\cite{YuPengZykov} further studied
the spectral Tur\'an problem for $s$-clique tensors.
Bu, Liu, and Zeng~\cite{BuLiuZengLocalization} proved a
localization theorem: their upper bounds on the $s$-clique spectral radius
use the numbers of vertices in the largest cliques containing each
vertex or $s$-clique. Xu and Fan~\cite{XuFanDisjointCliques} studied the
$s$-clique spectral Tur\'an problem when the forbidden graph is a disjoint
union of copies of $K_{r+1}$.

Nikiforov~\cite{Nikiforov} introduced the matrix
$A_\alpha(G)=\alpha D(G)+(1-\alpha)A(G)$, which combines the adjacency and
degree matrices. Suppose $F$ has chromatic number $\chi(F)=r+1\geq3$ and is
color-critical.
For~each~fixed $0\leq\alpha<1-1/r$,
Wu, Lu, and Kang~\cite[Theorem~11]{WuLuKangAlphaTuran} proved that
$T_r(n)$ is the unique extremal $F$-free graph for all sufficiently large $n$.

Guo and Zhou~\cite{GuoZhou} studied the corresponding
$\mathcal A_\alpha$-tensor for uniform hypergraphs. Wang, Shan, and
Wang~\cite{WangShanWang} studied further properties of the $\alpha$-spectral radius.
The signless Laplacian tensor is
$\mathcal Q=\mathcal D+\mathcal A=2\mathcal A_{1/2}$.
Lu, Wu, and Kang~\cite{LuWuKangSignless} gave a general criterion for
signless Laplacian spectral Tur\'an problems for uniform hypergraphs.

Let $\mathcal T=(t_{i_1\cdots i_s})$ be an order-$s$, dimension-$n$
complex tensor, where $s\geq2$ and $n\geq1$.
Following Qi~\cite{Qi2005} and Lim~\cite{Lim2005}, a number
$\lambda\in\mathbb C$ is an eigenvalue of $\mathcal T$ if there is a
nonzero vector $x\in\mathbb C^n$ such that
\[
 \sum_{i_2,\ldots,i_s=1}^n
 t_{i i_2\cdots i_s}x_{i_2}\cdots x_{i_s}
 =\lambda x_i^{s-1},\qquad i=1,\ldots,n.
\]
The sums on the left form the vector $\mathcal T x^{s-1}$.
The spectral radius of $\mathcal T$ is
\[
 \rho(\mathcal T):=\max\{|\lambda|:\lambda\text{ is an eigenvalue of }
 \mathcal T\}.
\]

A hypergraph $\mathcal H=(V(\mathcal H),E(\mathcal H))$ consists of a finite
vertex set $V(\mathcal H)$ and an edge set $E(\mathcal H)$ whose elements
are subsets of $V(\mathcal H)$. It is $s$-uniform if every edge has
exactly $s$ vertices~\cite{CooperDutle}.
Let $\mathcal H$ be an $s$-uniform hypergraph on $\{1,\ldots,n\}$.
Cooper and Dutle~\cite{CooperDutle} defined its adjacency tensor
$\mathcal A(\mathcal H)=(a_{i_1\cdots i_s})$ as the symmetric tensor
of order $s$ and dimension $n$ with entries
\[
 a_{i_1\cdots i_s}=
 \begin{cases}
 \dfrac1{(s-1)!},&\{i_1,\ldots,i_s\}\in E(\mathcal H),\\[2mm]
 0,&\text{otherwise}.
 \end{cases}
\]
The degree of vertex $i$ is $d_{\mathcal H}(i)=|\{e\in E(\mathcal H):i\in e\}|$.
The degree tensor $\mathcal D(\mathcal H)$ is the diagonal tensor of
order $s$ and dimension $n$ with diagonal entries
$d_{\mathcal H}(1),\ldots,d_{\mathcal H}(n)$~\cite{QiLaplacian}.
For $0<\alpha<1$, the $\mathcal A_\alpha$-tensor of
$\mathcal H$~\cite{GuoZhou} is defined by
\[
 \mathcal A_\alpha(\mathcal H)
 =\alpha\mathcal D(\mathcal H)+(1-\alpha)\mathcal A(\mathcal H).
\]
Its spectral radius is denoted by $\rho_\alpha(\mathcal H)$ and is
called the $\alpha$-spectral radius of $\mathcal H$.
We study the $\mathcal A_\alpha$-tensor of the $s$-clique hypergraph of a
$K_{r+1}$-free graph. Let $\Cs(G)$ be the set of $s$-cliques of $G$,
and define its $s$-clique hypergraph by
\[
 \mathcal H_s(G):=(V(G),\Cs(G)).
\]
For $\mathcal H_s(G)$, write $\As(G)=\mathcal A(\mathcal H_s(G))$ and
$\Ds(G)=\mathcal D(\mathcal H_s(G))$. The $s$-clique degree of a vertex
$v$ is $d_s^G(v)=d_{\mathcal H_s(G)}(v)$. Set
\[
 \Aas(G)=\mathcal A_\alpha(\mathcal H_s(G)),
 \qquad \rhoas(G)=\rho_\alpha(\mathcal H_s(G)).
\]
We call $\rhoas(G)$ the $\alpha$-$s$-clique spectral radius of $G$.
If $s>r$, $\Cs(G)$ is empty for every $K_{r+1}$-free graph,
so $\rhoas(G)=0$.

At $\alpha=0$, $\mathcal A_{\alpha,s}(G)=\As(G)$.
For $n\geq r$, Liu and Bu~\cite{LiuBu} proved that $T_r(n)$ is the unique
$n$-vertex $K_{r+1}$-free graph with maximum $r$-clique spectral radius.
Yu and Peng~\cite[Theorem~10]{YuPengZykov} extended this extremal
characterization to the $s$-clique spectral radius for every $2\leq s\leq r$.

When $s=2$, $\mathcal A_{\alpha,2}(G)=A_\alpha(G)$.
For $n\geq r$, Nikiforov~\cite[Theorem~27]{Nikiforov} determined all
$n$-vertex $K_{r+1}$-free graphs with maximum $A_\alpha$-spectral radius.
The unique extremal graph is $T_r(n)$ when $0\leq\alpha<1-1/r$, and
$K_{r-1}\vee\overline K_{n-r+1}$ when $1-1/r<\alpha<1$.
At $\alpha=1-1/r$, the extremal graphs are exactly the complete $r$-partite graphs.

\noindent\begin{minipage}{\linewidth}
\setlength{\parindent}{1.5em}
Our main result gives a complete classification for $s=r$ and all
sufficiently large $n$. Put $c_r=((r-1)/r)^{r-1}$.

\begin{theorem}[Spectral Tur\'an theorem for the $\alpha$-$r$-clique tensor]
\label{thm:alpha-clique-turan}
Let $r\geq3$ be an integer and $0<\alpha<1$.
There exists an integer $n_0=n_0(r,\alpha)\geq r$ such that, for every
$n\geq n_0$, the
unique $n$-vertex $K_{r+1}$-free graph
with maximum $\alpha$-$r$-clique spectral radius is
\[
 \begin{cases}
 T_r(n),&0<\alpha\leq c_r,\\
 K_1\vee T_{r-1}(n-1),&c_r<\alpha<1.
 \end{cases}
\]
\end{theorem}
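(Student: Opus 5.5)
First I reduce to complete $r$-partite graphs, then compute the spectral radius of those explicitly, then optimise over part sizes.

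\emph{Reduction.} I would show that an extremal $K_{r+1}$-free graph $G$ may be taken complete $r$-partite, by a Zykov symmetrization argument on the Perron vector. Since $\mathcal A_{\alpha,r}(G)$ is nonnegative and symmetric, I use the variational characterization
\[
\rho_{\alpha,r}(G)=\max_{\|x\|_r=1,\,x\ge0}\Big(\alpha\sum_{v}d_r^G(v)x_v^r+(1-\alpha)\,r\sum_{K\in\mathcal C_r(G)}\prod_{v\in K}x_v\Big).
\]
For nonadjacent $u,v$, I replace $u$ by a clone of $v$ (or $v$ by a clone of $u$), keeping the same $x$. The objective changes linearly in the ``contribution'' of $u$ versus $v$, so one of the two replacements does not decrease it. The new graph is still $K_{r+1}$-free. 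Iterating in the standard way, with strict improvement or equality analysis using the positive Perron eigenvector, shows that extremal graphs are complete multipartite with at most $r$ parts. For $s=r$ and $n\ge r$ they have exactly $r$ parts, since fewer parts give no $r$-cliques. I expect this is the content of the general ``every extremal graph is complete $r$-partite'' statement announced in the abstract, which I may cite.

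\emph{Explicit radius.} For $G=K_{n_1,\dots,n_r}$, the Perron vector is constant on parts, say $x_i$ on part $i$. Every $r$-clique is a transversal, so $d_r(v)=P/n_i$ for $v$ in part $i$, where $P=\prod_j n_j$. The eigen-equation becomes
\[
\rho x_i^{r-1}=\alpha\frac{P}{n_i}x_i^{r-1}+(1-\alpha)\prod_{j\ne i}n_jx_j .
\]
Substitute $y_i=n_ix_i^{r}$ and write $Q=\prod_j (n_jx_j)$. Then $\rho$ is the unique value with $(\rho-\alpha P/n_i)x_i^{r}=(1-\alpha)Q/n_i$ for all $i$. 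Together with the normalisation this gives
\[
\sum_i \Big(\frac{(1-\alpha)Q}{n_i\rho-\alpha P}\Big)=1 ,
\]
which is a one-variable equation for $\rho$ once $Q$ is eliminated through $\prod_i (n_ix_i)$. Asymptotically, with $n_i=\beta_i n$, this yields $\rho\approx n^{r-1}\,\phi(\beta)$. For balanced parts $\phi=r^{1-r}$. If one part has $\beta_1\to0$ and size $n_1=k$ bounded, then $\rho\approx\alpha P/n_1=\alpha\, n^{r-1}(r-1)^{1-r}(1+o(1))$ when $k=1$, plus a correction. The comparison $r^{1-r}$ versus $\alpha(r-1)^{1-r}$ gives exactly the threshold $c_r$.

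\emph{Optimisation over part sizes.} This is the main obstacle. I would proceed in three steps.
\begin{enumerate}[label=(\roman*)]
\item Show that $\phi$ on the simplex is maximised only at the barycenter (when $\alpha<c_r$) or at the faces $\beta_i\to0$ with the rest equal (when $\alpha>c_r$). This is a stability statement: any near-extremal sequence is close to one of these configurations.
\item In the first regime, do a discrete local analysis. Moving one vertex from a larger to a smaller part strictly increases $\rho$ when the parts are $n/r+o(n)$. I would show this by differentiating the characteristic equation in the $n_i$, treating them as reals, and checking a sign that is dominated by the $\alpha=0$ behaviour plus an $O(\alpha)$ diagonal term, which stays correct near balance.
\item In the small-part regime, show $\rho$ is decreasing in the small part size $k$. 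The dominant term is $\alpha P/k$ with $P\propto k$, so the leading terms cancel, and the comparison must come from the second-order term. Here I would expand $\rho=\alpha P/k+(1-\alpha)\cdot(\text{lower order})$ carefully to see that $k=1$ wins. Balancing of the other $r-1$ parts then follows as in step (ii).
\end{enumerate}

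\emph{The threshold $\alpha=c_r$.} At equality the leading terms tie, so a second-order comparison between $T_r(n)$ and $K_1\vee T_{r-1}(n-1)$ is needed. It must favour $T_r(n)$. The $(1-\alpha)$ adjacency contribution is of relative order $1$ for the Turán graph, but negligible (relative order $n^{-1/(r-1)}$ or smaller) for the star-like graph. Confirming that this lower-order term has the right sign for large $n$ is where I expect the most delicate computation.
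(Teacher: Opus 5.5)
Your architecture matches the paper's: the Zykov reduction (which you may cite as Theorem~\ref{thm:multipartite-reduction}), the characteristic equation of $K_{n_1,\ldots,n_r}$, stability, then local vertex moves. But step (i) is only asserted, and it is the heart of the theorem. Evaluating $\phi$ at $b_r$ and at $q_r$ shows only that $c_r$ is where these two candidates tie. It does not exclude other configurations, such as a part of intermediate proportion, several small parts, or unbalanced large parts. The obvious interpolation is too weak. With $f(\alpha,p)=\max_{X\in\Delta_r}\{\alpha D(p,X)+(1-\alpha)C(p,X)\}$ the normalized radius, it gives only $f(\alpha,p)\le\alpha(r-1)^{1-r}+(1-\alpha)r^{1-r}$, which is strictly larger than $M_\alpha$.

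The missing idea is to interpolate through a third point, $\alpha=c_r$. The function $f(\cdot,p)$ is convex, being a maximum of affine functions of $\alpha$. Also $f(0,p)\le r^{1-r}$ by AM--GM and $f(1,p)=\max_i\widehat d_i\le(r-1)^{1-r}$. Crucially, $f(c_r,p)\le r^{1-r}$ for \emph{every} $p$. For $p$ with positive coordinates, the normalized characteristic equation $\prod_i(f-\alpha\widehat d_i)=(1-\alpha)^r(\prod_i p_i)^{r-1}$ turns this into $\prod_i(y_i/Y-c_r)\ge(1-c_r)^r$ for $y>0$, $\sum_i y_i=r$, $Y=\prod_i y_i$. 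The paper proves this by strict Schur-convexity of $\log\prod_i(y_i/Y-c_r)$, the key estimate being $z_i+z_j\le1$ for $z_i=c_rY/y_i$. Since $r^{1-r}=c_r(r-1)^{1-r}$, convexity then gives exactly $f(\alpha,p)\le M_\alpha$, with equality only on $\mathcal E_\alpha$, and compactness gives the uniform gap behind your stability claim. Without an argument of this kind, your proposal establishes neither the threshold $c_r$ nor the confinement of maximizers to neighbourhoods of $b_r$ and $q_r$.

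Your local steps also need repair. In (iii) the leading terms do not cancel, and the lower-order term points the wrong way: $\delta=\rho-\alpha\prod_{i\ge2}n_i\approx(1-\alpha)^rk^{r-1}/\alpha^{r-1}$ \emph{increases} with $k$. Size $k=1$ wins because, at fixed $n$, moving $k-1$ vertices from the small part to a large one raises $\alpha\prod_{i\ge2}n_i$ by $\alpha(k-1)\prod_{i=2}^{r-1}n_i\asymp(k-1)n^{r-2}$. This beats $0<\delta\le(1-\alpha)^r(2/\alpha)^{r-1}k^{r-1}$ uniformly for $k\le\eta n$ with $\eta$ small. You must allow such $k$, since stability gives only $k=o(n)$, not $k$ bounded. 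Balancing the other $r-1$ parts does not follow ``as in (ii)'', because the normalized equation degenerates at $p_1=0$. The paper instead uses $\rho=\alpha\prod_{i\ge2}n_i+(1-\alpha)^r/\alpha^{r-1}+O(1/n)$ (Lemma~\ref{lem:uniform-correction}).

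In (ii), a first derivative in the $n_i$ vanishes at exact balance by symmetry. What is needed is the sign, on a whole neighbourhood, of the derivative of the characteristic polynomial in $t=p_1p_r$ with $p_1+p_r$ and $\rho$ fixed. At balance this is a positive multiple of $r\alpha-(r-1)$, hence negative for all $\alpha\le c_r$, not only for small $\alpha$.

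At $\alpha=c_r$ the decisive effect is not the adjacency contribution. Rather, $\rho(K_1\vee T_{r-1}(n-1))\le r^{1-r}(n-1)^{r-1}+O(1)$ because one vertex is essentially wasted, whereas $\rho(T_r(n))=r^{1-r}n^{r-1}+O(n^{r-3})$. You must also rule out the whole neighbourhood of $q_r$, or run (iii) at $\alpha=c_r$ too; the paper shows $n^{1-r}\rho\le r^{1-r}(1-1/(2n))$ there.

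Finally, your normalization should read $\sum_i(1-\alpha)Q/(\rho-\alpha P/n_i)=1$. It is cleaner to multiply the $r$ eigen-equations to get $\prod_i(\rho-\alpha P/n_i)=(1-\alpha)^rP^{r-1}$.
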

\end{minipage}

To prove Theorem~\ref{thm:alpha-clique-turan}, we first prove that every
extremal graph is complete $r$-partite. We then analyze the asymptotic
maximum and limiting part sizes. The next two theorems state the structural
result and the asymptotic maximum.

\begin{theorem}\label{thm:multipartite-reduction}
Let $r\geq3$, $2\leq s\leq r$, and $n\geq r$ be integers, and let
$0<\alpha<1$. If $G$ is an
$n$-vertex $K_{r+1}$-free graph with maximum $\alpha$-$s$-clique spectral
radius, then $G$ is complete $r$-partite.
\end{theorem}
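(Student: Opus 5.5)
We use the variational characterization of the spectral radius together with Zykov symmetrization. Since $\Aas(G)$ is a nonnegative symmetric tensor of even or odd order $s$,
\[
 \rhoas(G)=\max_{x\geq0,\ \|x\|_s=1} P_G(x),\qquad
 P_G(x)=\alpha\sum_{v}d_s^G(v)x_v^s+(1-\alpha)\,s\sum_{K\in\Cs(G)}\prod_{v\in K}x_v .
\]
This follows from the Perron--Frobenius theory for nonnegative symmetric tensors, whose spectral radius is attained by a nonnegative eigenvector. Equivalently,
\[
 P_G(x)=\sum_{K\in\Cs(G)}\Bigl(\alpha\sum_{v\in K}x_v^s+(1-\alpha)s\prod_{v\in K}x_v\Bigr).
\]
The key point is that $P_G$ is a sum over $s$-cliques of a symmetric function $f(x_K)$, where $f$ is nonnegative and strictly increasing in each positive coordinate.

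Fix an extremal $G$ and a nonnegative unit Perron vector $x$ with $P_G(x)=\rhoas(G)$. First we show that $G$ is connected, or more precisely that $x$ can be taken positive. If some $x_v=0$, we argue separately: adding edges does not decrease $\rhoas$, so we may assume $G$ is edge-maximal $K_{r+1}$-free, hence $G$ has an $r$-clique and the clique hypergraph is connected. For an edge-maximal $K_{r+1}$-free graph with $n\ge r$, every vertex lies in an $s$-clique. Irreducibility of the clique tensor then gives $x>0$ via weak irreducibility. We in fact need the stronger claim that \emph{every} extremal $G$ is edge-maximal. This holds because adding an edge that creates a new $s$-clique strictly increases $P_G(x)$ when $x>0$. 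Strictness requires positivity of $x$ for the original graph, so we handle this by working with a weakly irreducible component.

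Next comes the symmetrization step. Suppose $u,v$ are nonadjacent. Let $G_{u\to v}$ be the graph in which $u$ is replaced by a clone of $v$, namely $N(u):=N(v)$, and let $x'$ be $x$ with $x'_u:=x_v$, renormalised. Both $G_{u\to v}$ and $G_{v\to u}$ remain $K_{r+1}$-free. Split $P_G(x)=A+B_u+B_v$, where $A$ collects cliques avoiding $u$ and $v$, and $B_u=\sum_{K\ni u}f(x_K)$. Since $u\not\sim v$, no clique contains both. Choose weights by comparing $B_u/x_u^s$ with $B_v/x_v^s$, which is the natural ratio since $f$ is not homogeneous. Here we need to observe that $f$ \emph{is} homogeneous of degree $s$. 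Hence $B_u=x_u^s\,\beta_u$ with $\beta_u=\sum_{K\ni u}f(x_K/x_u)$ depending only on $N(u)$ and $x$ off $u$. Cloning then gives, after rescaling,
\[
 \frac{A+(x_u^s+x_v^s)\max(\beta_u,\beta_v)}{\|x\|_s^s}\ \ge\ P_G(x),
\]
so one of the two clonings does not decrease $\rhoas$. Equality analysis is needed: if $\beta_u\ne\beta_v$, the inequality is strict, which contradicts extremality. If $\beta_u=\beta_v$, cloning produces another extremal graph whose Perron vector satisfies the eigen-equations. Comparing the eigen-equation at a third vertex $w$ adjacent to exactly one of $u,v$ then shows that $N(u)=N(v)$ is forced. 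The standard Zykov argument then shows that nonadjacency is an equivalence relation, that is, it is transitive. Concretely, if $u\not\sim v$, $v\not\sim w$ but $u\sim w$, then comparing the clonings of $v$ into $u$ or $w$ yields a strict increase. Hence $G$ is complete multipartite, has at most $r$ parts because it is $K_{r+1}$-free, and has exactly $r$ parts since $n\ge r$ and splitting a part strictly increases $P$.

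The main obstacle is the strictness and equality case in the symmetrization, especially the diagonal term. The $\alpha d_s(v)x_v^s$ part depends on neighbours' clique structure, and changing $N(u)$ alters the degrees of other vertices as well. The homogeneity bookkeeping above, which writes everything as a sum over cliques, is designed to absorb this. The remaining difficulty is deriving $N(u)=N(v)$ in the tie case, for which we plan to use the eigen-equations $(\Aas x^{s-1})_w=\rhoas x_w^{s-1}$ at a distinguishing vertex $w$ in both $G$ and the cloned graph, where strict monotonicity of $f$ gives the contradiction.
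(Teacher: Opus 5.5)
Your skeleton matches the paper's: a clique-grouped Rayleigh quotient, Zykov cloning, an equality condition for nonadjacent pairs, transitivity of nonadjacency, and splitting parts. However, the preliminary reduction that makes the strict inequalities work is missing, and this is a genuine gap.

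You cannot replace $G$ by an edge-maximal supergraph, because the theorem concerns \emph{every} extremal $G$. Your repair is circular, since it needs $x>0$. It is also incomplete: for $s\geq3$, an added edge whose ends have no common neighbour creates no $s$-clique and leaves $\Aas(G)$ unchanged.

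More seriously, edges of $G$ lying in no $s$-clique are invisible to $\Aas(G)$, so no spectral comparison can exclude them. In your tie case, a vertex $w\in N(u)\setminus N(v)$ with $uw$ in no $s$-clique changes nothing in the eigen-equation at $w$. In the transitivity step, the gain is the total weight of the $s$-cliques containing both $u$ and $w$, which vanishes if $uw$ lies in none (or if $x$ vanishes on them).

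The paper closes this gap in two moves.
\begin{enumerate}
\item[(i)] Suppose $\mathcal H_s(G)$ is disconnected. Take a component whose $\alpha$-spectral radius equals $\rhoas(G)$, and attach a vertex outside it to $s-1$ vertices of one of its $s$-cliques, padding with isolated vertices. The new vertex has degree $s-1\leq r-1$, so the graph stays $K_{r+1}$-free. Strict subhypergraph monotonicity on the now connected $s$-clique hypergraph gives a strictly larger radius, a contradiction. Hence $\mathcal H_s(G)$ is connected and the Perron vector is positive.
\item[(ii)] Delete all edges lying in no $s$-clique to get $G_0$, which has the same tensor and is still extremal. Run the symmetrization on $G_0$. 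Finally, observe that a deleted edge would lie inside a part of the complete $r$-partite $G_0$, and together with one vertex from each other part it would span a $K_{r+1}$.
\end{enumerate}

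The symmetrization step is also miscomputed. Homogeneity does not make $\beta_u=\sum_{K\ni u}f(x_K/x_u)$ independent of $x_u$, because the ratios $x_w/x_u$ involve it. So giving $u$ the neighbourhood of $v$ while keeping $x_u$ does not produce $x_u^s\beta_v$, and $\beta_u=\beta_v$ is not the right equality condition. You must copy the coordinate as well. With $y_u=x_v$ one has $P_{G_{u\to v}}(y)=\rhoas(G)-B_u+B_v$ and $\sum_w y_w^s=1-x_u^s+x_v^s$, so
\[
\rhoas(G_{u\to v})-\rhoas(G)\geq\frac{Z(v)-Z(u)}{1-x_u^s+x_v^s},\qquad Z(w)=B_w-\rhoas(G)x_w^s .
\]
Extremality then forces $Z(u)=Z(v)$.

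For transitivity, the strict gain appears only when $v$ is copied onto $u$ and $w$ simultaneously. The gain is then the weight of the cliques containing both $u$ and $w$, divided by the new norm. Two separate single clonings each give a zero lower bound once the $Z$'s agree.

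With (i), (ii) and this bookkeeping in place, your eigen-equation tie argument is in fact a valid alternative to the paper's double copy. If $Z(u)=Z(v)$, then $y/\|y\|_s$ is a positive maximizer for $G_{u\to v}$, hence an eigenvector there. At $w\in N(u)\setminus N(v)$, the right-hand side strictly drops, because every $s$-clique through $uw$ disappears and $x>0$. Meanwhile $\rhoas(G)x_w^{s-1}$ is unchanged, a contradiction. The case $w\in N(v)\setminus N(u)$ is symmetric.
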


For $s=r$, the asymptotic analysis gives the following limit for the
maximum $\alpha$-$r$-clique spectral radius.

\begin{theorem}\label{thm:global-phase}
For every fixed $r\geq3$ and $0<\alpha<1$,
\begin{equation}\label{eq:global-phase}
\lim_{n\to\infty}n^{1-r}
 \max_{\substack{|V(G)|=n\\G\text{ is }K_{r+1}\text{-free}}}
 \rho_{\alpha,r}(G)
 =\max\left\{\frac1{r^{r-1}},
               \frac{\alpha}{(r-1)^{r-1}}\right\}.
\end{equation}
\end{theorem}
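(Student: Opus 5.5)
By Theorem~\ref{thm:multipartite-reduction} with $s=r$, the maximum is attained by a complete $r$-partite graph $G=K_{n_1,\ldots,n_r}$. The plan is therefore to compute or bound $\rho_{\alpha,r}$ for complete $r$-partite graphs, get a lower bound from two explicit constructions, and get a matching upper bound over all part-size vectors.

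\emph{Lower bound.} In $G=K_{n_1,\ldots,n_r}$ the $r$-cliques are exactly the transversals, so $d_r(v)=\prod_{j\neq i}n_j$ for $v$ in part $i$.
\begin{itemize}
\item For the first term, take $x$ to be the all-ones vector normalized in $\ell^r$. The Collatz--Wielandt/Rayleigh lower bound $\rho\geq x^{\top}\mathcal A_{\alpha,r}x^{r-1}$ gives $\rho_{\alpha,r}(G)\geq r|\mathcal C_r(G)|/n$. For $T_r(n)$ this equals $r^{1-r}n^{r-1}+o(n^{r-1})$.
\item For the second term, use $\rho(\mathcal T)\geq\max_i t_{i\cdots i}$ for a nonnegative symmetric tensor (take $x=e_i$). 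For $K_1\vee T_{r-1}(n-1)$ the apex vertex has $d_r=\prod(\text{other parts})\approx((n-1)/(r-1))^{r-1}$. Hence $\rho_{\alpha,r}\geq\alpha n^{r-1}/(r-1)^{r-1}+o(n^{r-1})$.
\end{itemize}

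\emph{Upper bound.} For nonnegative symmetric tensors we have $\rho=\max_{\|x\|_r=1,x\geq0}x^{\top}\mathcal T x^{r-1}$. Writing $x^{\top}\mathcal A_{\alpha,r}x^{r-1}=\alpha\sum_v d_r(v)x_v^r+(1-\alpha)\,r\sum_{e}\prod_{v\in e}x_v$, the first sum is at most $\Delta_r$, the maximum $r$-clique degree.

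We then bound the two terms separately. By AM--GM on parts, $\Delta_r\leq\max_i\prod_{j\neq i}n_j\leq((n-n_i)/(r-1))^{r-1}$. For the second term, let $X_i$ be the sum of $x_v$ over part $i$. Then $\sum_e\prod x_v=\prod_i X_i$. By H\"older, $X_i\leq n_i^{1-1/r}\|x|_{V_i}\|_r$, and then AM--GM over the $\|x|_{V_i}\|_r^r$ gives $r\prod X_i\leq (\prod n_i)^{(r-1)/r}\cdot r\cdot r^{-1}\leq (n/r)^{r-1}$.

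This naive combination only yields the bound $\alpha\Delta_r+(1-\alpha)r^{1-r}n^{r-1}$, which is not the claimed maximum. So the two terms must be coupled, and this coupling is the main obstacle.

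To couple them, I would parametrize $n_i=\beta_i n$, pass to the limit, and study the continuous function
\[
\phi(\beta)=\lim n^{1-r}\rho_{\alpha,r}(K_{\beta_1n,\ldots,\beta_rn}).
\]
I would compute $\phi$ via the eigenvector equations. By symmetry within parts, the Perron vector is constant on each part, say with value $y_i$. This gives the system
\[
\lambda y_i^{r-1}=\alpha P_i y_i^{r-1}+(1-\alpha)P_i\prod_{j\neq i}y_j,\qquad P_i=\prod_{j\neq i}\beta_j,
\]
after dividing by $n^{r-1}$ (the number of edges through $v$ in part $i$ is $n^{r-1}P_i$ and each has product $\prod_{j\neq i}y_j$). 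Setting $z_i=\prod_{j\ne i}y_j/y_i^{r-1}$ turns this into $\lambda=P_i(\alpha+(1-\alpha)z_i)$ for all $i$.

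The remaining task is to show $\lambda\leq\max\{r^{1-r},\alpha(r-1)^{1-r}\}$. The relation $\prod_i z_i=\prod_i y_i^{\,r-1}/\prod_i y_i^{\,r-1}=1$ gives
\[
\prod_i\frac{\lambda/P_i-\alpha}{1-\alpha}=1,
\]
with each factor nonnegative. This defines $\lambda$ implicitly as a function of the $\beta_i$, and I would maximize it over the simplex.

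Using $\prod_i P_i=(\prod_i\beta_i)^{r-1}$ together with AM--GM/concavity, I expect the maximizer to be either the balanced point or a boundary point where $\beta_i\to0$. At a boundary point the factor with $\beta_i$ small forces $\lambda\approx\alpha P_i\leq\alpha(r-1)^{1-r}$. In the interior, a Lagrange-multiplier or convexity argument should show that the only critical point is the balanced one, where $\lambda=r^{1-r}$.

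The upper bound is also uniform in $n$ up to $o(1)$: the $\rho$ of a blow-up equals the finite-dimensional reduced value exactly, so $n^{1-r}\max\rho=\max_{\beta\in\text{grid}}\phi(\beta)\to\max\phi$ by continuity of $\phi$. Combined with the lower bound, this gives the limit.
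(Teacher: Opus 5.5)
Your reduction to complete $r$-partite graphs, both lower bounds, the reduced eigen-equations leading to $\prod_i(\lambda/P_i-\alpha)=(1-\alpha)^r$ on the branch $\lambda>\alpha\max_iP_i$, and the remark that $n^{1-r}\rho_{\alpha,r}(K_{n_1,\ldots,n_r})$ equals the limiting value exactly are all correct and agree with the paper.

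The gap is the upper bound $\lambda(\beta)\le\max\{r^{1-r},\alpha(r-1)^{1-r}\}$ for every $\beta$ in the simplex, which is the whole content of the theorem. You only conjecture it (``I expect'', ``should show''), and the mechanism you suggest does not work. The function $\lambda(\beta)$ is not concave in $\beta$, and the balanced point is not its only interior critical point. Take $\alpha=c_r$ and the segment $\beta(\mu)=(\mu,\frac{1-\mu}{r-1},\ldots,\frac{1-\mu}{r-1})$, $0<\mu\le 1/r$. Here $\lambda=B_r=r^{1-r}$ at $\mu=1/r$, and $\lambda\to c_r(r-1)^{1-r}=B_r$ as $\mu\to0^+$. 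Strictly in between, $\lambda<B_r$, so $\lambda$ has a minimum at some interior $\mu^*$. $\lambda$ is smooth on the open simplex (implicit function theorem, using $\lambda>\alpha\max_iP_i$) and symmetric in $\beta_2,\ldots,\beta_r$. Hence its gradient at $\beta(\mu^*)$, projected onto $\{\sum_iv_i=0\}$, is parallel to the segment and therefore zero. So $\beta(\mu^*)$ is a non-balanced interior critical point, and such points persist for $\alpha$ near $c_r$. A Lagrange-multiplier argument would have to locate all critical points as functions of $\alpha$ and compare their values with both candidates. That comparison across the phase transition is exactly the hard part, and your sketch does not explain where the threshold $c_r=((r-1)/r)^{r-1}$ comes from.

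The missing idea is to couple the two terms through $\alpha$ rather than through $\beta$. For fixed $\beta$, the variational formula reduces to
$\lambda=\max_X\{\alpha\sum_iX_iP_i+(1-\alpha)\,r(\prod_i\beta_i)^{(r-1)/r}(\prod_iX_i)^{1/r}\}$ over the simplex, where $X_i$ is the $\ell^r$-mass of $x$ on part $i$. This is a maximum of functions affine in $\alpha$, hence convex in $\alpha$. It therefore suffices to bound $\lambda$ at three values:
\begin{itemize}
\item at $\alpha=0$, where $\lambda=(\prod_i\beta_i)^{(r-1)/r}\le B_r$ by AM--GM;
\item at $\alpha=1$, where $\lambda=\max_iP_i\le(r-1)^{1-r}$;
\item at the tie value $\alpha=c_r$, where one needs $\lambda\le B_r$.
\end{itemize}
By your own characteristic equation and monotonicity in $\lambda$, the last bound is equivalent to $B_r>c_r\max_iP_i$ together with $\prod_i(B_r/P_i-c_r)\ge(1-c_r)^r$. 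The paper proves this inequality (Lemma~\ref{lem:multipartite-product}) by showing that $\log\prod_i(y_i/Y-c_r)$ is strictly Schur-convex. Its key estimate $z_i+z_j\le1$ holds precisely because of the value of $c_r$. Interpolating linearly between $\alpha=0$, $c_r$, and $1$, and using $B_r=c_r(r-1)^{1-r}$, then gives $\lambda\le\max\{B_r,\alpha(r-1)^{1-r}\}$. Without this inequality, or an equivalent argument at $\alpha=c_r$, your upper bound does not close.
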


The two terms in the maximum in Theorem~\ref{thm:global-phase} come from
$T_r(n)$ and $K_1\vee T_{r-1}(n-1)$, respectively.
Proposition~\ref{prop:multipartite-phase} describes the structure of
asymptotically extremal complete $r$-partite graphs. Below $c_r$, all
parts have size $n/r+o(n)$. Above $c_r$, one part has size $o(n)$ and
the remaining parts have size $n/(r-1)+o(n)$. At $c_r$, both part-size
patterns are asymptotically extremal.
These asymptotic part-size conclusions are used in
Lemma~\ref{lem:eventual-part-sizes} to determine the maximizing integer part sizes.

For $0<\alpha\leq c_r$, the conclusion of
Theorem~\ref{thm:alpha-clique-turan} extends to every positive $n$ divisible by $r$.

\begin{corollary}\label{cor:balanced-multipartite}
Let $r\geq3$ and $n\geq1$ be integers with $r\mid n$, and let
$0<\alpha\leq c_r$.
Every $n$-vertex $K_{r+1}$-free graph $G$ satisfies
\[
 \rho_{\alpha,r}(G)
 \leq \rho_{\alpha,r}(T_r(n))
 =\left(\frac nr\right)^{r-1},
\]
with equality if and only if $G\cong T_r(n)$.
\end{corollary}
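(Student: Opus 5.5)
By Theorem~\ref{thm:multipartite-reduction} with $s=r$ (for $n\geq r$), every extremal graph is complete $r$-partite, with part sizes $n_1,\dots,n_r$, all $n_i\geq 1$ since $G$ must contain $r$-cliques. For $n<r$ every graph has $\rho_{\alpha,r}=0<(n/r)^{r-1}$, so that case is excluded by divisibility anyway. So it suffices to show that for a complete $r$-partite graph $K_{n_1,\dots,n_r}$ with $\sum n_i=n$, $r\mid n$, we have $\rho_{\alpha,r}\leq (n/r)^{r-1}$, with equality only for equal parts.

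\textbf{Computing the value on $T_r(n)$.} $\mathcal H_r(G)$ is the complete $r$-partite $r$-graph. For $T_r(n)$ with parts of size $m=n/r$, every vertex has $r$-clique degree $m^{r-1}$. The all-ones vector gives $(\mathcal A_r x^{r-1})_i=m^{r-1}$, so $\mathcal A_{\alpha,r}\mathbf 1^{r-1}=m^{r-1}\mathbf 1$. By Perron--Frobenius for nonnegative weakly irreducible tensors (the hypergraph is connected), $\rho_{\alpha,r}(T_r(n))=m^{r-1}$.

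\textbf{Upper bound for unbalanced partitions.} Let $x$ be the positive Perron vector of $K_{n_1,\dots,n_r}$. By symmetry it is constant on each part, say $x=y_i$ on part $i$. Normalize so that $\sum_i n_i y_i^r=1$. Put $P=\prod_j n_jy_j$. The eigen-equation on part $i$ reads
\[
\rho y_i^{r-1}=\alpha\Big(\prod_{j\neq i}n_j\Big)y_i^{r-1}+(1-\alpha)\frac{P}{n_iy_i}.
\]
Multiplying by $n_iy_i$ and summing gives
\[
\rho=\alpha\sum_i \frac{N}{n_i}n_iy_i^r+r(1-\alpha)P, \qquad N=\prod_j n_j .
\]
Set $w_i=n_iy_i^r$, so the $w_i$ lie on the simplex. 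Then
\[
\rho=\alpha N\sum_i \frac{w_i}{n_i}+r(1-\alpha)\prod_i w_i^{1/r}\Big(\prod_i n_i\Big)^{(r-1)/r}.
\]
The second term is at most $(1-\alpha)N^{(r-1)/r}\le(1-\alpha)m^{r-1}$ by AM--GM. The first is at most $\alpha N/\min_i n_i$.

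If $n_{\min}$ is not too small, both terms are controlled. The difficulty is that $N/n_{\min}$ can reach about $(n/(r-1))^{r-1}$, which exceeds $m^{r-1}$. This is exactly why $\alpha\le c_r$ is needed, since $c_r\,(n/(r-1))^{r-1}=m^{r-1}$. The cleaner route is the variational bound
\[
\rho=\max_{w}\Big[\alpha\sum_i w_i\frac{N}{n_i}+(1-\alpha)\,rN^{(r-1)/r}\prod_i w_i^{1/r}\Big],
\]
a concave maximization over the simplex. By convexity in $\alpha$ (a maximum of affine functions), we get
\[
\rho(\alpha)\le\max\big(\rho(0),\ \text{line through }\rho(0)\text{ and }\rho(c_r)\big).
\]
More simply, by convexity it suffices to check the endpoints $\alpha=0$ and $\alpha=c_r$ of $[0,c_r]$. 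At $\alpha=0$ this is Liu--Bu: $\rho\le m^{r-1}$ with equality only for balanced parts.

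\textbf{The main obstacle: the endpoint $\alpha=c_r$.} There I must show
\[
c_r\sum_i w_i\frac{N}{n_i}+(1-c_r)\,rN^{(r-1)/r}\prod_i w_i^{1/r}\le m^{r-1}.
\]
The plan is to bound $\sum_i w_iN/n_i\le\max_i N/n_i$ and $r\prod w_i^{1/r}\le 1$ separately, which is wasteful. Instead I would use weighted AM--GM, $\prod w_i^{1/r}\le\sum_i w_i\lambda_i$ with $\prod_i\lambda_i^{1/r}\cdot r^{-1}$-type normalization, choosing $\lambda_i\propto n_i^{-1}$. This reduces the problem to a real inequality in the $n_i$: maximize $\big(c_rN/n_i+(1-c_r)\,\text{const}\big)$ subject to $\sum n_i=n$. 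That inequality is essentially the content of Theorem~\ref{thm:global-phase} at the critical point, where both patterns tie. I expect the extremal configurations to be the balanced one and the degenerate $n_{\min}\to0$ one, the latter excluded because $n_i\ge1$ is an integer. Equality for $\alpha<c_r$ then follows from strict convexity together with the $\alpha=0$ uniqueness. For $\alpha=c_r$, equality requires equality in the AM--GM and Maclaurin steps, which forces $n_i=m$ for all $i$.
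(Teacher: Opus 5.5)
Your skeleton matches the paper's: reduce to $K_{n_1,\ldots,n_r}$ by Theorem~\ref{thm:multipartite-reduction}, write $\rho_{\alpha,r}$ as a concave maximization over the simplex, use convexity in $\alpha$ to reduce to $\alpha=0$ and $\alpha=c_r$, and settle $\alpha=0$ by AM--GM. However, the endpoint $\alpha=c_r$ is the whole difficulty, you do not prove it, and the route you sketch fails. With weights $\lambda_i\propto n_i^{-1}$, weighted AM--GM gives $rN^{(r-1)/r}\prod_iw_i^{1/r}\le\sum_iw_iN/n_i$. The coefficient of $w_i$ then becomes $c_rN/n_i+(1-c_r)N/n_i=N/n_i$, so your bound collapses to $N/n_1$, which is just the row-sum bound $\rho\le\Delta$. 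This is too weak: for $r=3$, $n=6$ and parts $(1,2,3)$ it gives $6$, which exceeds $4=(n/r)^{r-1}$. More generally, for $n_1=1$ and the remaining parts balanced it is about $((n-1)/(r-1))^{r-1}$. Nor can you appeal to Theorem~\ref{thm:global-phase}. A limit statement yields no exact inequality at a fixed $n$, and its own upper bound is derived from precisely the inequality you are missing.

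What you need is $f(c_r,p)\le B_r=r^{1-r}$ for every $p$ with positive coordinates, with equality only at $p=b_r$. Put $p_i=n_i/n$ and $\widehat d_i=\prod_{j\ne i}p_j$. In your framework the correct weights are $\lambda_i=B_r-c_r\widehat d_i$. The normalized objective $c_r\sum_iw_i\widehat d_i+(1-c_r)r(\prod_ip_i)^{(r-1)/r}\prod_iw_i^{1/r}$ is then bounded by $\sum_iw_i(c_r\widehat d_i+\lambda_i)=B_r$, provided $B_r>c_r\widehat d_1$ and
\[
 \prod_{i=1}^r\bigl(B_r-c_r\widehat d_i\bigr)\ge(1-c_r)^r\Bigl(\prod_{i=1}^rp_i\Bigr)^{r-1}.
\]
Strictness of this inequality for $p\ne b_r$ gives the equality case. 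This is exactly Lemma~\ref{lem:multipartite-product}, and it is not a routine AM--GM or Maclaurin fact. The paper proves it by showing that $\log\prod_i(y_i/Y-c_r)$, with $y_i=rp_i$ and $Y=\prod_iy_i$, is strictly Schur-convex, using $z_i=c_r\prod_{j\ne i}y_j<1$ and $z_i+z_j\le1$ in the Schur--Ostrowski criterion. It then compares with the characteristic equation $\prod_i(f-c_r\widehat d_i)=(1-c_r)^r(\prod_ip_i)^{r-1}$; your weighted AM--GM with these $\lambda_i$ would be a valid substitute for that comparison. Without this lemma, your equality argument at $\alpha=c_r$ has nothing to rest on. A minor point: for $0<\alpha<c_r$ you need no strict convexity, since $f(\alpha,p)=B_r$ already forces $f(0,p)=B_r$ through the convex-combination bound.
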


\section{Preliminaries}\label{sec:preliminaries}

Throughout this paper, all graphs are finite, simple, and undirected, and
uniqueness is understood up to isomorphism.
The vertex and edge sets of a graph $G$ are
$V(G)$ and $E(G)$, and $G[W]$ is
the subgraph induced by $W\subseteq V(G)$.
For disjoint graphs $G$ and $H$, their join $G\vee H$ is obtained
by adding all edges between their vertex sets.
We use $\mathbb R_+=[0,\infty)$ and $\mathbb R_{>0}=(0,\infty)$.
Empty sums and products are $0$ and $1$, respectively.

\noindent\begin{minipage}{\linewidth}
\setlength{\parindent}{1.5em}
An $s$-uniform hypergraph is $d$-regular if every vertex has degree $d$.
A walk in $\mathcal H$ is an alternating sequence
$v_0,e_1,v_1,\ldots,e_\ell,v_\ell$ of vertices
$v_i\in V(\mathcal H)$ and edges $e_i\in E(\mathcal H)$ such that
$\{v_{i-1},v_i\}\subseteq e_i$ for $1\leq i\leq\ell$.
The hypergraph $\mathcal H$ is connected if any two distinct vertices
are joined by a walk. The hypergraph $\mathcal H_s(G)$ may be disconnected
even when $G$ is connected.
\end{minipage}

For a symmetric nonnegative tensor $\mathcal T$ of order $s$, a nonnegative
eigenvector $x$ corresponding to $\rho(\mathcal T)$ and satisfying
$\sum_i x_i^s=1$ is called a Perron vector of $\mathcal T$~\cite{ChangPearsonZhang,Qi2013}.

The associated directed graph of $\mathcal T$ has vertex set
$\{1,\ldots,n\}$ and an arc $i\to j$ for distinct $i,j$ whenever
$t_{i i_2\cdots i_s}\ne0$ for some indices with
$j\in\{i_2,\ldots,i_s\}$. A tensor is called \emph{weakly irreducible}
if this directed graph is strongly connected.

For $0<\alpha<1$, the tensors $\mathcal A_\alpha(\mathcal H)$ and
$\mathcal A(\mathcal H)$ have the same associated directed graph.
This graph is strongly connected exactly when $\mathcal H$ is connected.

\begin{lemma}[Perron--Frobenius theorem; \cite{FriedlandGaubertHan}]
\label{lem:perron-frobenius}
Let $\mathcal T$ be a weakly irreducible nonnegative tensor of order $\ell\geq2$.
Then $\rho(\mathcal T)$ has a unique positive eigenvector $x$ satisfying
$\sum_i x_i^\ell=1$.
\end{lemma}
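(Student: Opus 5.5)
We will not reprove Lemma~\ref{lem:perron-frobenius} in full detail, since it is the nonlinear Perron--Frobenius theorem of \cite{FriedlandGaubertHan}. The plan we would follow is the standard one for order-preserving homogeneous maps. Put
\[
F(x)=\bigl(\mathcal T x^{\ell-1}\bigr)^{[1/(\ell-1)]},
\]
where the power is taken componentwise. Then $F$ maps $\mathbb R_+^n$ to itself, is monotone ($x\le y\Rightarrow F(x)\le F(y)$), and is positively homogeneous of degree one. A positive eigenvector of $\mathcal T$ with eigenvalue $\lambda$ is the same as a positive eigenvector of $F$ with eigenvalue $\lambda^{1/(\ell-1)}$. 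The proof has three parts, in this order: (i) existence of a positive eigenvector; (ii) its eigenvalue equals $\rho(\mathcal T)$; (iii) uniqueness after normalization.

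\emph{Existence} is the step we expect to be the main obstacle. The difficulty is that a limit of positive vectors may acquire zero coordinates, and a zero coordinate $x_i=0$ only forces every nonzero term $t_{ii_2\cdots i_\ell}x_{i_2}\cdots x_{i_\ell}$ to contain \emph{some} zero factor, not all of them. So the naive argument ``zeros propagate along arcs'' does not work directly.

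What we would try first is the Gaubert--Gunawardena criterion. For each proper nonempty $I\subset\{1,\dots,n\}$, let $e_I$ denote its indicator vector. Weak irreducibility (an arc $i\to j$ means $F_i(x+te_j)\to\infty$ as $t\to\infty$) implies that for each such $I$ there is an arc leaving $I$. Hence $F_i(\mathbf 1+te_I)\to\infty$ for some $i\notin I$. This is exactly the hypothesis under which the ``lower and upper slice spaces'' $\{x: F(x)\ge x\}$ and $\{x:F(x)\le x\}$, suitably rescaled, are bounded in Hilbert's projective metric. A fixed-point argument in that metric (Brouwer on a compact slice of the open cone) then yields $u\in\mathbb R_{>0}^n$ and $\mu>0$ with $F(u)=\mu u$.

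A fallback would be to perturb to $\mathcal T+\varepsilon\mathcal J$, where $\mathcal J$ is the all-ones tensor. For the perturbed tensor, Brouwer's theorem on the simplex gives a positive eigenvector. One would then control $\min_i x_i/\max_i x_i$ uniformly in $\varepsilon$ by iterating along the strongly connected digraph $n$ times.

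\emph{Identification with $\rho$.} Let $u>0$ satisfy $\mathcal T u^{\ell-1}=\lambda u^{[\ell-1]}$. Let $\beta$ be any eigenvalue with eigenvector $z$, and choose $i$ maximizing $|z_i|/u_i$, with $c=|z_i|/u_i$. The triangle inequality gives
\[
|\beta||z_i|^{\ell-1}\le(\mathcal T|z|^{\ell-1})_i\le c^{\ell-1}(\mathcal T u^{\ell-1})_i=\lambda|z_i|^{\ell-1}.
\]
Hence $|\beta|\le\lambda$, and so $\lambda=\rho(\mathcal T)$.

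\emph{Uniqueness.} Let $x,y>0$ both be eigenvectors for $\rho=\rho(\mathcal T)$. Set $t=\min_i y_i/x_i$, so $y\ge tx$ with equality on a nonempty set $E$. Suppose $i\in E$ and $i\to j$. Since all coordinates are positive, every term of $(\mathcal T y^{\ell-1})_i$ dominates the corresponding term of $t^{\ell-1}(\mathcal T x^{\ell-1})_i$. The term with a nonzero entry involving $j$ is strictly larger if $y_j>tx_j$. But
\[
(\mathcal T y^{\ell-1})_i=\rho y_i^{\ell-1}=\rho t^{\ell-1}x_i^{\ell-1}=t^{\ell-1}(\mathcal T x^{\ell-1})_i,
\]
which forces $j\in E$. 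Thus $E$ is closed under arcs. Strong connectivity then gives $E=\{1,\dots,n\}$, so $y=tx$, and the normalization $\sum_i x_i^\ell=\sum_i y_i^\ell=1$ gives $t=1$.
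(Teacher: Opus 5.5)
The paper gives no proof of this lemma; it quotes it from \cite{FriedlandGaubertHan}. Your outline therefore goes further than the paper, and it is correct. Your identification of the eigenvalue with $\rho(\mathcal T)$, by comparing at an index maximizing $|z_i|/u_i$, is complete. So is your uniqueness argument: the set where $y_i/x_i$ attains its minimum is closed under arcs, hence is everything. Both use only positivity and strong connectivity.

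Existence is the one step you delegate, just as the paper delegates the whole lemma. Your perturbation fallback actually closes it. Let $\tau$ be the least nonzero entry of $\mathcal T$, and let $\lambda_\varepsilon$ be the bounded eigenvalue of $\mathcal T+\varepsilon\mathcal J$ with positive eigenvector $x$, scaled so that $\min_k x_k=1$. An arc $i\to j$ then gives $\tau x_j\le(\mathcal T x^{\ell-1})_i\le\lambda_\varepsilon x_i^{\ell-1}$. Propagating from a minimizing index along paths of length at most $n-1$ bounds $\max_k x_k$ uniformly in $\varepsilon$, so any limit point is a positive eigenvector of $\mathcal T$.

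Some descriptions in your primary route are loose, though harmless. First, to get $F_i(\mathbf 1+te_I)\to\infty$ with $i\notin I$, you need an arc \emph{entering} $I$ from outside, not one leaving $I$; strong connectivity supplies this as well. Second, that condition bounds the sub-eigenspaces $\{x>0:F(x)\le\beta x\}$ in Hilbert's metric for every $\beta$. This is all you need, because every slice space lies inside one of them. Super-level sets need not be bounded: for the all-ones $2\times2$ matrix, $\{x>0:F(x)\ge x\}$ is the whole open quadrant. Third, slice spaces need not be convex, so ``Brouwer on a compact slice'' is not a literal argument. The perturbation route is the self-contained way to finish existence.
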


For a connected $s$-uniform hypergraph $\mathcal H$, the unique positive
Perron vector of $\mathcal A_\alpha(\mathcal H)$ is called the
$\alpha$-Perron vector of $\mathcal H$~\cite{GuoZhou}.
For a graph $G$ with connected $\mathcal H_s(G)$,
the same term refers to the $\alpha$-Perron vector of $\mathcal H_s(G)$.

An automorphism of $\mathcal H$ is a permutation of its vertices that
preserves its edge set. By Lemma~\ref{lem:perron-frobenius}, every automorphism
of a connected $\mathcal H$ preserves its unique $\alpha$-Perron vector.

\begin{lemma}[\cite{ShaoShanZhang}]\label{lem:block-decomposition}
Let $\mathcal H$ be a finite $s$-uniform hypergraph with $s\geq2$ and
at least one vertex, and let $0<\alpha<1$. If its connected components
are $\mathcal H_1,\ldots,\mathcal H_t$, then
\[
 \rho_\alpha(\mathcal H)=\max_{1\leq i\leq t}\rho_\alpha(\mathcal H_i).
\]
\end{lemma}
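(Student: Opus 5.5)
The statement just above is Lemma~\ref{lem:block-decomposition}, cited from \cite{ShaoShanZhang}. I will prove it by splitting the tensor along the components and then examining eigenvalues block by block.

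First I fix the structure. Order the vertices so that those of $\mathcal H_1,\ldots,\mathcal H_t$ come consecutively; isolated vertices count as trivial components. Every edge lies inside a single component, and the degree of a vertex in $\mathcal H$ equals its degree in its own component. Hence $\mathcal A_\alpha(\mathcal H)$ is block diagonal, and its entries are nonzero only when all indices lie in one component's vertex set $V_i$. For $x=(x^{(1)},\ldots,x^{(t)})$ this gives, for every $j\in V_i$,
\[
 (\mathcal A_\alpha(\mathcal H)x^{s-1})_j=(\mathcal A_\alpha(\mathcal H_i)(x^{(i)})^{s-1})_j .
\]

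Next I prove the inequality $\rho_\alpha(\mathcal H)\ge\max_i\rho_\alpha(\mathcal H_i)$. Let $\lambda$ be an eigenvalue of $\mathcal A_\alpha(\mathcal H_i)$ with eigenvector $y$. Put $x^{(i)}=y$ and set all other blocks to zero. Then $x\ne0$, and by the display $\mathcal A_\alpha(\mathcal H)x^{s-1}=\lambda x^{[s-1]}$. For a block $k\ne i$ both sides vanish, because $s-1\ge1$ makes every product containing a zero coordinate equal to zero. So every eigenvalue of a component is an eigenvalue of $\mathcal H$.

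For the reverse inequality, let $\lambda$ be an eigenvalue of $\mathcal A_\alpha(\mathcal H)$ with eigenvector $x\ne0$. Choose $i$ with $x^{(i)}\ne0$. Restricting the eigen-equations to the coordinates in $V_i$ and using the display, we get $\mathcal A_\alpha(\mathcal H_i)(x^{(i)})^{s-1}=\lambda (x^{(i)})^{[s-1]}$. Thus $\lambda$ is an eigenvalue of that component, and $|\lambda|\le\rho_\alpha(\mathcal H_i)$. Taking the maximum over eigenvalues gives equality.

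The only delicate point is trivial components, meaning isolated vertices and components with no edges. For these, $\mathcal A_\alpha$ is the zero tensor of dimension $|V_i|$. Its only eigenvalue is $0$, since any nonzero $x$ forces $\lambda x_j^{s-1}=0$ for some $x_j\ne0$. This is consistent with the argument above, so no separate Perron--Frobenius input is needed. The hard part is simply keeping the block-diagonal bookkeeping honest, in particular checking that the degree tensor does not couple different blocks.
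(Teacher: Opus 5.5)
Your proof is correct and complete. Every adjacency entry and every diagonal degree entry of $\mathcal A_\alpha(\mathcal H)$ has all its indices inside one component, and a vertex has the same degree in $\mathcal H$ as in its component, so the eigen-equations decouple. Extending an eigenvector of a component by zero then shows that each component's spectrum lies in that of $\mathcal H$; here $s-1\geq1$ kills both the adjacency and the degree terms on the other blocks. Restricting to a block on which the eigenvector is nonzero gives the reverse inclusion. So the spectrum of $\mathcal H$, as a set, is exactly the union of the component spectra, and the two maxima agree.

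The paper gives no proof of this lemma; it only cites \cite{ShaoShanZhang}. That work derives the result from determinants: the determinant of a direct sum of tensors, and hence the characteristic polynomial, factors into powers of those of the summands. That route gives more than is needed here, namely the spectrum with algebraic multiplicities and the finiteness and nonemptiness of the eigenvalue set. Your eigenvector argument gives only the set equality, but that is all the spectral radius depends on, and it uses nothing beyond the definition of an eigenpair. Finiteness and nonemptiness of each component's spectrum are already built into the paper's definition of $\rho$ as a maximum, so you are not missing anything by not proving them.

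A small remark on your last paragraph: under the paper's walk-based definition of connectedness, a component with no edges is necessarily a single isolated vertex. So ``isolated vertices'' and ``components with no edges'' are the same case, and your treatment of it as the zero tensor with sole eigenvalue $0$ is right.
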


\begin{lemma}[\cite{Qi2013,KannanShakedMondererBerman}]
\label{lem:subhypergraph-monotonicity}
Let $\mathcal H$ be a finite $s$-uniform hypergraph with $s\geq2$, and let
$0<\alpha<1$. Let $\mathcal K$ be a subhypergraph of $\mathcal H$ with
at least one vertex. Then
\[
 \rho_\alpha(\mathcal K)\leq\rho_\alpha(\mathcal H).
\]
If $\mathcal H$ is connected and $\mathcal K\ne\mathcal H$, then the
inequality is strict.
\end{lemma}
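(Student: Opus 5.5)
The plan is to compare quadratic-type forms through the variational characterization of the spectral radius for symmetric nonnegative tensors. First, regard $\mathcal A_\alpha(\mathcal K)$ as a tensor on $V(\mathcal H)$ by giving every vertex outside $V(\mathcal K)$ degree $0$ and no incident edges. This does not change the spectral radius: by Lemma~\ref{lem:block-decomposition}, the added isolated vertices are components with $\rho_\alpha=0$. Next, observe the entrywise comparison $\mathcal A_\alpha(\mathcal K)\le\mathcal A_\alpha(\mathcal H)$. Indeed, every edge of $\mathcal K$ is an edge of $\mathcal H$, so the off-diagonal entries are dominated, and $d_{\mathcal K}(i)\le d_{\mathcal H}(i)$, so the diagonal entries are dominated.

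For a symmetric nonnegative tensor $\mathcal T$ of order $s$, I will use the characterization (Qi~\cite{Qi2013})
\[
 \rho(\mathcal T)=\max\Bigl\{x^{\top}\mathcal T x^{s-1}: x\in\mathbb R_+^n,\ \textstyle\sum_i x_i^s=1\Bigr\},
\]
where the maximum is attained exactly at nonnegative eigenvectors for $\rho(\mathcal T)$ (by the Lagrange/KKT conditions). For $\mathcal T=\mathcal A_\alpha(\mathcal H)$ the form is
\[
 x^{\top}\mathcal A_\alpha(\mathcal H)x^{s-1}=\alpha\sum_i d_{\mathcal H}(i)x_i^s+(1-\alpha)\,s\sum_{e\in E(\mathcal H)}\prod_{v\in e}x_v .
\]
Take a Perron vector $x$ of $\mathcal A_\alpha(\mathcal K)$, extended by zeros. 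Then
\[
 \rho_\alpha(\mathcal K)=x^{\top}\mathcal A_\alpha(\mathcal K)x^{s-1}\le x^{\top}\mathcal A_\alpha(\mathcal H)x^{s-1}\le\rho_\alpha(\mathcal H),
\]
where the first inequality holds termwise because $x\ge0$. This proves the weak inequality.

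For strictness, assume $\mathcal H$ is connected, $\mathcal K\ne\mathcal H$, and suppose equality holds. Then $x$ attains the maximum of the form for $\mathcal H$, so $x$ is a nonnegative eigenvector of $\mathcal A_\alpha(\mathcal H)$ for $\rho_\alpha(\mathcal H)$. Since $\mathcal H$ is connected, $\mathcal A_\alpha(\mathcal H)$ is weakly irreducible. The key step, and the main obstacle, is to conclude that $x$ is strictly positive. The uniqueness statement in Lemma~\ref{lem:perron-frobenius} concerns only positive eigenvectors, so positivity is not automatic from it. I would invoke the stronger Perron--Frobenius fact of Friedland--Gaubert--Han~\cite{FriedlandGaubertHan}: for a weakly irreducible nonnegative tensor, every nonnegative eigenvector is positive, and its eigenvalue is $\rho$. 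If a self-contained argument is needed, I would instead try a perturbation: replace $x$ by the Perron vector of $\mathcal A_\alpha(\mathcal H)+\varepsilon\mathcal J$, where $\mathcal J$ is the all-ones tensor, and pass to the limit $\varepsilon\to0$.

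Once $x>0$, a contradiction follows. Because $\mathcal K\ne\mathcal H$, either some edge $e\in E(\mathcal H)\setminus E(\mathcal K)$ exists, or $\mathcal K$ misses a vertex of $\mathcal H$. In the second case, connectivity of $\mathcal H$ gives that vertex an incident edge (when $|V(\mathcal H)|\ge2$), and that edge is absent from $\mathcal K$. Such an edge $e$ contributes $(1-\alpha)s\prod_{v\in e}x_v>0$ to the $\mathcal H$-form but not to the $\mathcal K$-form, so the middle inequality above is strict. This contradicts the assumed equality. The remaining degenerate case, where $\mathcal H$ is a single vertex, forces $\mathcal K=\mathcal H$, so there is nothing to prove.
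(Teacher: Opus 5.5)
The paper gives no proof of this lemma and only cites Qi and Kannan--Shaked-Monderer--Berman. Your weak inequality is correct. Your strategy for strictness is the standard one: equality makes the zero-extended Perron vector $x$ of $\mathcal K$ a maximizer of $P_\alpha(\mathcal H,\cdot)$, and a positive $x$ then makes any missing edge contribute strictly.

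\textbf{The gap is the positivity step.} The fact you attribute to Friedland--Gaubert--Han, that every nonnegative eigenvector of a weakly irreducible nonnegative tensor is positive and belongs to $\rho$, is false. It is the Chang--Pearson--Zhang statement for (strongly) irreducible tensors, and $\mathcal A_\alpha(\mathcal H)$ with $s\geq3$ is in general only weakly irreducible. For example, let $\mathcal H$ be the single edge $\{1,2,3\}$ with $s=3$. Then $(\mathcal A_\alpha(\mathcal H)x^2)_1=\alpha x_1^2+(1-\alpha)x_2x_3$, and similarly for the other coordinates. So $x=(1,0,0)$ is a nonnegative eigenvector with eigenvalue $\alpha<1=\rho_\alpha(\mathcal H)$, although $\mathcal H$ is connected. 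Your fallback does not help either. The limit of Perron vectors of $\mathcal A_\alpha(\mathcal H)+\varepsilon\mathcal J$ is some nonnegative eigenvector for $\rho_\alpha(\mathcal H)$, but nothing ties it to your particular $x$. Limits of positive vectors can also have zero coordinates.

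\textbf{What you need is true and easy to prove.} When $\mathcal H$ is connected, every nonnegative maximizer of $P_\alpha(\mathcal H,\cdot)$ is positive. Suppose $Z=\{v:x_v=0\}\neq\emptyset$. By connectivity some edge $e$ meets both $Z$ and the support of $x$. Put $k=|e\cap Z|$, so $1\leq k\leq s-1$, and set $x'=x+\varepsilon\mathbf 1_{e\cap Z}$. Since $x'\geq x$, no term of $P_\alpha(\mathcal H,\cdot)$ decreases. The term of $e$ becomes $(1-\alpha)s\varepsilon^k c$, where $c=\prod_{v\in e\setminus Z}x_v>0$. Meanwhile $\sum_v x_v'^s=1+k\varepsilon^s$. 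Hence
\[
 \frac{P_\alpha(\mathcal H,x')}{1+k\varepsilon^s}\geq\frac{\rho_\alpha(\mathcal H)+(1-\alpha)sc\,\varepsilon^k}{1+k\varepsilon^s}>\rho_\alpha(\mathcal H)
\]
for small $\varepsilon>0$, because $k<s$. This contradicts~\eqref{eq:hypergraph-variational}.

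Alternatively, compare with the positive Perron vector $y$. Take $t=\min_{x_v>0}y_v/x_v$. Equality in $(\mathcal A_\alpha(\mathcal H)y^{s-1})_v\geq(\mathcal A_\alpha(\mathcal H)(tx)^{s-1})_v$ on $\{v:y_v=tx_v\}$ shows that this set has no outgoing arcs, contradicting weak irreducibility.

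With this lemma in place your argument is complete. The KKT step also becomes unnecessary, since only the maximizer property of $x$ is used. The case $V(\mathcal K)\neq V(\mathcal H)$ is then immediate, because the zero-extended $x$ is not positive.
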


\begin{lemma}[{\cite[Theorem~2]{Qi2013}}]
\label{lem:variational}
Let $\mathcal H$ be a finite $s$-uniform hypergraph with $s\geq2$ and
at least one vertex, and let $0<\alpha<1$. Then
\begin{equation}\label{eq:hypergraph-variational}
 \rho_\alpha(\mathcal H)
 =\max_{\substack{x\in\mathbb R_+^{V(\mathcal H)}\\\sum_vx_v^s=1}}
 P_\alpha(\mathcal H,x),
\end{equation}
where
\[
 P_\alpha(\mathcal H,x)
 =\alpha\sum_{v\in V(\mathcal H)}d_{\mathcal H}(v)x_v^s
 +(1-\alpha)s\sum_{e\in E(\mathcal H)}\prod_{v\in e}x_v.
\]
For any Perron vector $x$ of $\mathcal A_\alpha(\mathcal H)$ and every
$v\in V(\mathcal H)$, the eigenvalue equation is
\begin{equation}\label{eq:hypergraph-coordinate}
 \rho_\alpha(\mathcal H)x_v^{s-1}
 =\alpha d_{\mathcal H}(v)x_v^{s-1}
 +(1-\alpha)\sum_{\substack{e\in E(\mathcal H)\\v\in e}}
 \prod_{w\in e\setminus\{v\}}x_w.
\end{equation}
In particular, for a graph $G$, write
\begin{equation}\label{eq:variational}
 P_{\alpha,s}(G,x):=P_\alpha(\mathcal H_s(G),x)
 =\alpha\sum_{v\in V(G)}d_s^G(v)x_v^s
 +(1-\alpha)s\sum_{C\in\Cs(G)}\prod_{v\in C}x_v.
\end{equation}
Then $\rhoas(G)=\max_{x\geq0,\,\sum_vx_v^s=1}P_{\alpha,s}(G,x)$, and
the coordinate equation becomes
\begin{equation}\label{eq:clique-coordinate}
 \rhoas(G)x_v^{s-1}
 =\alpha d_s^G(v)x_v^{s-1}
 +(1-\alpha)\sum_{\substack{C\in\Cs(G)\\v\in C}}
 \prod_{w\in C\setminus\{v\}}x_w.
\end{equation}
\end{lemma}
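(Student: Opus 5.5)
The plan is to treat $\mathcal A_\alpha(\mathcal H)$ as a symmetric nonnegative tensor $\mathcal T=(t_{i_1\cdots i_s})$ of order $s$. I will prove the general identity
\[
\rho(\mathcal T)=\max_{x\ge0,\ \sum x_i^s=1}x^{\top}\mathcal T x^{s-1},
\qquad
x^{\top}\mathcal T x^{s-1}=\sum_{i_1,\ldots,i_s}t_{i_1\cdots i_s}x_{i_1}\cdots x_{i_s},
\]
and then compute both sides for the specific tensor.

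\emph{Computations.} The diagonal part $\alpha\mathcal D(\mathcal H)$ contributes $\alpha\sum_v d_{\mathcal H}(v)x_v^s$. For each edge $e$, the off-diagonal entries indexed by orderings of $e$ number $s!$, and each equals $1/(s-1)!$. So $e$ contributes $s\prod_{v\in e}x_v$, which gives $P_\alpha(\mathcal H,x)$.

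For the coordinate equation at $v$:
\begin{itemize}
\item the diagonal term is $\alpha d_{\mathcal H}(v)x_v^{s-1}$;
\item an adjacency term with $i_1=v$ is nonzero exactly when $\{i_2,\ldots,i_s\}=e\setminus\{v\}$ for some edge $e\ni v$;
\item there are $(s-1)!$ such orderings, each of weight $1/(s-1)!$, giving $(1-\alpha)\sum_{e\ni v}\prod_{w\in e\setminus\{v\}}x_w$.
\end{itemize}
Specializing to $\mathcal H=\mathcal H_s(G)$ yields \eqref{eq:variational} and \eqref{eq:clique-coordinate}.

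\emph{Lower bound.} By the Perron--Frobenius theory for nonnegative tensors~\cite{ChangPearsonZhang}, there is a nonnegative eigenvector $x$ for $\rho(\mathcal T)$. Normalize it so that $\sum x_i^s=1$. Pairing the eigen-equation with $x$ gives
\[
x^{\top}\mathcal T x^{s-1}=\rho\sum_i x_i^s=\rho .
\]
Hence the maximum is at least $\rho$, and the eigen-equation for this Perron vector is exactly \eqref{eq:hypergraph-coordinate}.

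\emph{Upper bound.} This is the main step. First assume $\mathcal T$ has a positive eigenvector $y$ for $\rho$. For nonnegative $x$, write $u_i=x_i/y_i$ and apply AM--GM:
\[
\prod_k x_{i_k}=\prod_k y_{i_k}\prod_k u_{i_k}\le \prod_k y_{i_k}\cdot\frac1s\sum_k u_{i_k}^s .
\]
Multiply by $t_{i_1\cdots i_s}\ge0$, sum over all index tuples, and use symmetry of $\mathcal T$ to move each $u_{i_k}^s$ to the first slot. This gives
\[
x^{\top}\mathcal T x^{s-1}\le\sum_i u_i^s\,y_i(\mathcal Ty^{s-1})_i=\rho\sum_i u_i^s y_i^s=\rho\sum_i x_i^s .
\]

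A positive eigenvector need not exist when $\mathcal H$ is disconnected or has isolated vertices. I would handle this by perturbation: replace $\mathcal T$ by $\mathcal T_\varepsilon=\mathcal T+\varepsilon\mathcal J$, where $\mathcal J$ is the all-ones tensor. Then $\mathcal T_\varepsilon$ is symmetric, positive, and weakly irreducible, so Lemma~\ref{lem:perron-frobenius} gives it a positive eigenvector. The bound $x^{\top}\mathcal T x^{s-1}\le x^{\top}\mathcal T_\varepsilon x^{s-1}\le\rho(\mathcal T_\varepsilon)$ then holds for unit nonnegative $x$.

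It remains to let $\varepsilon\to0$, which needs $\rho(\mathcal T_\varepsilon)\to\rho(\mathcal T)$. This is the delicate point. I would justify it by continuity of the spectral radius of nonnegative tensors, or alternatively avoid it with Lemma~\ref{lem:block-decomposition}. In that alternative, I split $x$ along the connected components of $\mathcal H$; an isolated vertex contributes $0$. Apply the positive-eigenvector argument on each connected component, where Lemma~\ref{lem:perron-frobenius} applies. Summing gives $P_\alpha(\mathcal H,x)\le\max_i\rho_\alpha(\mathcal H_i)\sum_v x_v^s=\rho_\alpha(\mathcal H)$.
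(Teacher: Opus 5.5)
Your proof is correct. The paper gives no proof of its own here; it only quotes \cite[Theorem~2]{Qi2013}. The usual proof of that theorem gets the upper bound from optimality conditions rather than from comparison with an eigenvector. Take a maximizer $\bar x$ of $x^{\top}\mathcal T x^{s-1}$ on $\{x\geq0,\ \sum_i x_i^s=1\}$ and write its KKT conditions. Nonnegativity of $\mathcal T$ forces the multipliers of the constraints $x_i\geq0$ to vanish. So $\bar x$ is itself a nonnegative eigenvector, the maximum is an eigenvalue, and hence it is at most $\rho(\mathcal T)$. That route covers every symmetric nonnegative tensor at once, needs no irreducibility, and also shows that every maximizer is an eigenvector for $\rho$.

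Your weighted AM--GM comparison with a positive eigenvector $y$ is more elementary. It also shows that any eigenvalue with a positive eigenvector already equals the maximum. The price is that you need such a $y$, hence the extra step for disconnected $\mathcal H$.

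Of your two ways to handle that step, the component version is cleaner. It uses only Lemmas~\ref{lem:perron-frobenius} and~\ref{lem:block-decomposition}. It also gives the lower bound without the Perron--Frobenius theorem for reducible tensors: extend by zero the positive Perron vector of a component with $\rho_\alpha(\mathcal H_i)=\rho_\alpha(\mathcal H)$.

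The perturbation version is also sound, and you need only $\liminf_{\varepsilon\to0}\rho(\mathcal T_\varepsilon)\leq\rho(\mathcal T)$. Take a sequence $\varepsilon_k\to0$ realizing this $\liminf$. A limit point of the normalized positive eigenvectors of $\mathcal T_{\varepsilon_k}$ is then a nonnegative eigenvector of $\mathcal T$ for the limiting value, which gives the inequality.
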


The positive-vector bounds below follow from the Collatz--Wielandt
formula.

\begin{lemma}[\cite{FriedlandGaubertHan}]\label{lem:positive-vector-bounds}
Let $\mathcal H$ be a finite connected $s$-uniform hypergraph with
$s\geq2$ and at least one edge, and let $0<\alpha<1$. For every positive
vector $z$ indexed by $V(\mathcal H)$,
\begin{equation}\label{eq:positive-vector-bounds}
 \min_{v\in V(\mathcal H)}
 \frac{(\mathcal A_\alpha(\mathcal H)z^{s-1})_v}{z_v^{s-1}}
 \leq\rho_\alpha(\mathcal H)\leq
 \max_{v\in V(\mathcal H)}
 \frac{(\mathcal A_\alpha(\mathcal H)z^{s-1})_v}{z_v^{s-1}}.
\end{equation}
\end{lemma}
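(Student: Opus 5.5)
The plan is to compare $z$ with the positive Perron vector, evaluating the eigenvalue equation at the coordinate where the ratio between the two vectors is extremal. This gives both inequalities at once.

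Write $\mathcal T=\mathcal A_\alpha(\mathcal H)$. This is a nonnegative symmetric tensor of order $s$. As noted before Lemma~\ref{lem:perron-frobenius}, its associated directed graph equals that of $\mathcal A(\mathcal H)$. That graph is strongly connected because $\mathcal H$ is connected, so $\mathcal T$ is weakly irreducible. Lemma~\ref{lem:perron-frobenius} then provides a positive vector $x$ with $\mathcal T x^{s-1}=\rho_\alpha(\mathcal H)\,x^{[s-1]}$, where $x^{[s-1]}$ denotes the entrywise power. Let
\[
 m=\min_v\frac{(\mathcal T z^{s-1})_v}{z_v^{s-1}},
 \qquad
 M=\max_v\frac{(\mathcal T z^{s-1})_v}{z_v^{s-1}}.
\]
These are well defined because $z>0$.

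For the upper bound, set $c=\max_v x_v/z_v>0$ and let $u$ be a vertex attaining this maximum. Then $x\le cz$ entrywise, with equality at $u$. Every entry of $\mathcal T$ is nonnegative, so the map $y\mapsto(\mathcal T y^{s-1})_u$ is monotone on $\mathbb R_+^n$. It is also homogeneous of degree $s-1$. Hence
\[
 \rho_\alpha(\mathcal H)\,x_u^{s-1}=(\mathcal T x^{s-1})_u
 \le c^{s-1}(\mathcal T z^{s-1})_u
 \le c^{s-1}M z_u^{s-1}=M x_u^{s-1}.
\]
Since $x_u>0$, this gives $\rho_\alpha(\mathcal H)\le M$.

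For the lower bound, set $c'=\min_v x_v/z_v>0$, attained at a vertex $w$. Then $x\ge c'z$ entrywise, with equality at $w$. The same monotonicity and homogeneity give
\[
 \rho_\alpha(\mathcal H)\,x_w^{s-1}\ge c'^{\,s-1}(\mathcal T z^{s-1})_w\ge m\,x_w^{s-1}.
\]
Therefore $\rho_\alpha(\mathcal H)\ge m$.

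The only point that needs care is securing a strictly positive eigenvector belonging to $\rho_\alpha(\mathcal H)$ itself. Positivity is needed so that the ratios $x_v/z_v$ are positive and we can divide by $x_u^{s-1}$ and $x_w^{s-1}$. This is exactly where connectivity, through weak irreducibility and Lemma~\ref{lem:perron-frobenius}, is used. Once that is in place, the remaining steps are the monotonicity and homogeneity of $y\mapsto\mathcal T y^{s-1}$, which hold because every entry of $\mathcal T$ is nonnegative.
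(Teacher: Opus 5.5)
Your proof is correct. The paper gives no argument of its own and cites the Collatz--Wielandt formula of~\cite{FriedlandGaubertHan}; your comparison of $z$ with the positive Perron vector from Lemma~\ref{lem:perron-frobenius}, at a vertex where $x_v/z_v$ is maximal or minimal, is exactly the standard derivation of those two inequalities, and taking $z=x$ in your argument also recovers the full min--max form of that formula.
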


The degree bounds below follow from the row-sum bounds. The strict
inequality follows from~\eqref{eq:hypergraph-coordinate} at a vertex
of maximum degree, since the $\alpha$-Perron vector is positive.

\begin{lemma}[\cite{Qi2013}]\label{lem:hypergraph-bounds}
Let $\mathcal H$ be a finite connected $s$-uniform hypergraph with
$s\geq2$ and at least one edge, and let $0<\alpha<1$.
Let $\overline d=s|E(\mathcal H)|/|V(\mathcal H)|$ be its average degree
and $\Delta=\max_vd_{\mathcal H}(v)$ its maximum degree. Then
\begin{equation}\label{eq:degree-bounds}
 \overline d\leq\rho_\alpha(\mathcal H)\leq\Delta,
 \qquad \rho_\alpha(\mathcal H)>\alpha\Delta.
\end{equation}
In particular, if $\mathcal H$ is $d$-regular, then
$\rho_\alpha(\mathcal H)=d$.
\end{lemma}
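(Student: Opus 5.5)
The lemma follows by testing the two earlier spectral characterizations on the all-ones vector (normalized where needed), and then using positivity of the Perron vector for the strict bound. Throughout, $\mathcal H$ is connected, so Lemma~\ref{lem:perron-frobenius} applies to $\mathcal A_\alpha(\mathcal H)$ and supplies a positive $\alpha$-Perron vector. Write $n=|V(\mathcal H)|$.

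\emph{Row sums.} Fix a vertex $v$. In $(\mathcal A(\mathcal H)\mathbf 1^{s-1})_v$, each edge $e\ni v$ contributes $(s-1)!$ ordered index tuples. Each such tuple carries the entry $1/(s-1)!$, so the sum equals $d_{\mathcal H}(v)$. The degree tensor contributes $d_{\mathcal H}(v)$ directly. Hence
\[
(\mathcal A_\alpha(\mathcal H)\mathbf 1^{s-1})_v=\alpha d_{\mathcal H}(v)+(1-\alpha)d_{\mathcal H}(v)=d_{\mathcal H}(v).
\]

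\emph{Lower bound.} Insert $x_v=n^{-1/s}$ for all $v$ into the variational formula of Lemma~\ref{lem:variational}; this $x$ satisfies $\sum_v x_v^s=1$. Using $\sum_v d_{\mathcal H}(v)=s|E(\mathcal H)|$, we get
\[
P_\alpha(\mathcal H,x)=\alpha\,\frac{s|E(\mathcal H)|}{n}+(1-\alpha)\,s\,|E(\mathcal H)|\,n^{-1}=\overline d .
\]
Therefore $\rho_\alpha(\mathcal H)\ge\overline d$.

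\emph{Upper bound.} Apply Lemma~\ref{lem:positive-vector-bounds} with $z=\mathbf 1$. By the row-sum identity, each ratio in~\eqref{eq:positive-vector-bounds} equals $d_{\mathcal H}(v)$. So $\min_v d_{\mathcal H}(v)\le\rho_\alpha(\mathcal H)\le\Delta$, which gives the upper bound in~\eqref{eq:degree-bounds}.

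\emph{Regular case.} If $\mathcal H$ is $d$-regular, then $\overline d=\Delta=d$. The two bounds coincide, so $\rho_\alpha(\mathcal H)=d$.

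\emph{Strict inequality.} This is the only step that needs care. Let $x$ be the positive $\alpha$-Perron vector and let $u$ be a vertex with $d_{\mathcal H}(u)=\Delta$. Since $\mathcal H$ has at least one edge, $\Delta\ge1$, so $u$ lies in some edge $e$. The eigen-equation~\eqref{eq:hypergraph-coordinate} at $u$ reads
\[
\rho_\alpha(\mathcal H)x_u^{s-1}=\alpha\Delta x_u^{s-1}+(1-\alpha)\sum_{e\ni u}\prod_{w\in e\setminus\{u\}}x_w .
\]
The sum on the right is nonempty and all its terms are positive, because $x>0$. Since also $1-\alpha>0$, the second term is strictly positive. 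Dividing by $x_u^{s-1}>0$ yields $\rho_\alpha(\mathcal H)>\alpha\Delta$.

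The strict bound is exactly where connectivity is used: without weak irreducibility, a Perron vector could vanish on the edges through $u$, and the inequality could fail.
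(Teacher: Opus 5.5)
Your proof is correct and follows the paper's route: the all-ones vector gives the degree bounds, and the eigen-equation at a maximum-degree vertex with the positive $\alpha$-Perron vector gives $\rho_\alpha(\mathcal H)>\alpha\Delta$. For the degree bounds you rightly use the variational formula at the normalized uniform vector for $\overline d$, since row sums alone only give $\min_v d_{\mathcal H}(v)$, and Collatz--Wielandt for $\Delta$.

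One quibble: your closing remark is too strong. Without connectivity the strict inequality still holds: apply your argument to the component containing a maximum-degree vertex and invoke Lemma~\ref{lem:block-decomposition}. Only the argument through a global Perron vector breaks down.
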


The polynomial $P_{\alpha,s}(G,x)$ in~\eqref{eq:variational} also makes sense at
$\alpha=0$ and $\alpha=1$, where it is associated with $\As(G)$ and
$\Ds(G)$, respectively. We use these endpoints only for interpolation
in the proof of Proposition~\ref{prop:multipartite-phase}. The extremal
problem assumes $0<\alpha<1$.

\section{Proofs of the main results}

\subsection{Reduction to complete multipartite graphs}

Let $u,v$ be distinct nonadjacent vertices of a graph $H$.
Copying $u$ to $v$ means deleting all edges incident with $v$ and
joining $v$ to every neighbor of $u$. Denote the resulting graph by
$H_{v\to u}$.

To prove Theorem~\ref{thm:multipartite-reduction}, we use Zykov symmetrization
to copy vertices. The following lemma ensures that this
operation preserves $K_{r+1}$-freeness.

\begin{lemma}[Zykov symmetrization~\cite{Zykov}]
\label{lem:zykov-symmetrization}
Let $r\geq1$ be an integer, and let $u,v$ be distinct nonadjacent
vertices of a graph $H$. If $H$ is $K_{r+1}$-free, then so is
$H_{v\to u}$.
\end{lemma}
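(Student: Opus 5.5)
The statement to prove is Lemma~\ref{lem:zykov-symmetrization}: copying a vertex preserves $K_{r+1}$-freeness. The argument is a direct clique-tracing one, by contradiction.

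Suppose $H_{v\to u}$ contains a copy of $K_{r+1}$ on a vertex set $Q$. The only edges of $H_{v\to u}$ that are not edges of $H$ are incident with $v$, because the operation changes only the edges at $v$. Every other pair keeps its adjacency status from $H$. This gives two cases.

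\emph{Case $v\notin Q$.} Then every edge of $Q$ is an edge of $H$, so $Q$ is already a $K_{r+1}$ in $H$. This contradicts the assumption on $H$.

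\emph{Case $v\in Q$.} First, $u\notin Q$. Indeed, in $H_{v\to u}$ the neighbourhood of $v$ is exactly $N_H(u)$, which does not contain $u$; so $u$ and $v$ are nonadjacent in $H_{v\to u}$ and cannot both lie in the clique $Q$.

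Next set
\[
 Q'=(Q\setminus\{v\})\cup\{u\}.
\]
This is an $(r+1)$-set of vertices of $H$. We check that it is a clique in $H$. Take any two vertices of $Q\setminus\{v\}$. Neither is $v$, so they are adjacent in $H_{v\to u}$ if and only if they are adjacent in $H$; hence they are adjacent in $H$. Now take any $w\in Q\setminus\{v\}$. It is adjacent to $v$ in $H_{v\to u}$, so $w\in N_H(u)$, which means $w$ is adjacent to $u$ in $H$.

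Therefore $Q'$ is a $K_{r+1}$ in $H$, a contradiction. In both cases $H_{v\to u}$ contains no $K_{r+1}$.

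There is no real obstacle. The only step needing care is checking that $u\notin Q$, so that $Q'$ genuinely has $r+1$ vertices; this is exactly where the nonadjacency of $u$ and $v$ is used.
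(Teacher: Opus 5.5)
Your proof is correct. The paper gives no proof of this lemma and only cites Zykov; your argument is the standard one behind it. Strictly, the nonadjacency of $u$ and $v$ in $H$ is what makes $N_{H_{v\to u}}(v)=N_H(u)$ hold without a loop at $v$, and $u\notin Q$ then follows simply because $u\notin N_H(u)$.
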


\begin{proof}[Proof of Theorem~\ref{thm:multipartite-reduction}]
Let $G$ be an extremal graph. Since $T_r(n)$ is $K_{r+1}$-free and
contains an $s$-clique, $\rho_{\alpha,s}(G)>0$.

We first prove that $\mathcal H_s(G)$ is connected, which gives the
positive Perron vector needed for the vertex comparisons. Suppose otherwise.
By Lemma~\ref{lem:block-decomposition}, choose the vertex set $W$ of a
component whose $\alpha$-spectral radius equals $\rhoas(G)$, and put $H=G[W]$.
No $s$-clique meets two components of $\mathcal H_s(G)$, so
$\mathcal H_s(H)$ is the selected component, with the same vertex degrees.
Thus $\mathcal H_s(H)$ is connected and has positive $\alpha$-spectral radius.
Choose an $s$-clique $C=\{v_1,\ldots,v_s\}$ of $H$
and a vertex $w\notin W$. Form $H'$ from $H$ by adding $w$ and joining
it to $v_1,\ldots,v_{s-1}$. The graph $H'$ is $K_{r+1}$-free, since
$w$ has degree $s-1\leq r-1$. The $s$-clique hypergraph $\mathcal H_s(H')$ is
connected and contains $\mathcal H_s(H)$ as a proper subhypergraph.
Lemma~\ref{lem:subhypergraph-monotonicity} therefore gives
\[
 \rhoas(H')>\rhoas(H)=\rhoas(G).
\]
Adding $n-|W|-1$ isolated vertices to $H'$ gives an $n$-vertex
$K_{r+1}$-free graph with $\alpha$-$s$-clique spectral radius $\rhoas(H')$, contradicting
extremality. Hence $\mathcal H_s(G)$ is connected.

Let $G_0$ be obtained from $G$ by deleting all edges contained in no
$s$-clique. This deletion changes neither $\mathcal A_{\alpha,s}(G)$ nor
$\mathcal H_s(G)$. Hence $G_0$ is extremal, $\mathcal H_s(G_0)$ is
connected, and every edge of $G_0$ lies in an $s$-clique. By
Lemma~\ref{lem:perron-frobenius}, $G_0$ has an $\alpha$-Perron vector~$x$.

Copying a vertex can change the clique degrees of several vertices. To include
all these changes, group both terms of~\eqref{eq:variational} by cliques
and assign each $s$-clique $C$ the weight
\[
 q_x^{(s)}(C)=\alpha\sum_{v\in C}x_v^s
 +(1-\alpha)s\prod_{v\in C}x_v.
\]
For $v\in V(G_0)$, define
\[
 L_s(v)=\sum_{\substack{C\in\Cs(G_0)\\v\in C}}q_x^{(s)}(C),
 \qquad Z_s(v)=L_s(v)-\rho_{\alpha,s}(G_0)x_v^s.
\]
Thus $L_s(v)$ is the total contribution to~\eqref{eq:variational}
from cliques containing $v$. For distinct nonadjacent vertices $u,v$,
apply Lemma~\ref{lem:zykov-symmetrization} to $G_0$ and denote the
resulting $K_{r+1}$-free graph by $G_{v\to u}$.

Let $y$ be obtained from $x$ by replacing $x_v$ with $x_u$, and put
$D_{uv}=\sum_wy_w^s=1-x_v^s+x_u^s$. The $s$-cliques of $G_{v\to u}$
containing $v$ correspond, by replacing $v$ with $u$, to those of $G_0$
containing $u$. Their total contribution at $y$ is $L_s(u)$.
No clique contains both $u$ and $v$, so
\[
 P_{\alpha,s}(G_{v\to u},y)
 =\rho_{\alpha,s}(G_0)-L_s(v)+L_s(u).
\]
Since $P_{\alpha,s}(G_{v\to u},y)$ is homogeneous of degree $s$ in $y$,
\eqref{eq:hypergraph-variational}, applied to
$\mathcal H_s(G_{v\to u})$ and $y/D_{uv}^{1/s}$, gives
\[
 \rho_{\alpha,s}(G_{v\to u})-\rho_{\alpha,s}(G_0)
 \geq
 \frac{L_s(u)-L_s(v)+\rho_{\alpha,s}(G_0)(x_v^s-x_u^s)}{D_{uv}}
 =\frac{Z_s(u)-Z_s(v)}{D_{uv}}.
\]
Since $G_0$ is extremal and $D_{uv}>0$, we have
$Z_s(u)\leq Z_s(v)$. Interchanging
$u$ and $v$ gives $Z_s(v)\leq Z_s(u)$. Hence
\[
 Z_s(u)=Z_s(v)\qquad\text{whenever }uv\notin E(G_0).
\]

We now use the identity $Z_s(u)=Z_s(v)$ for nonadjacent $u,v$ to
prove that nonadjacency is transitive. Suppose that there are vertices $i,j,k$ with $ij,jk\notin E(G_0)$ and
$ik\in E(G_0)$. Apply Lemma~\ref{lem:zykov-symmetrization} twice,
first copying $j$ to $i$, then $j$ to $k$. Neither
operation changes the neighborhood of $j$. Denote the resulting
$K_{r+1}$-free graph by $G''$. The vertices $i,j,k$ are pairwise
nonadjacent. For each of $i$ and $k$, replacing that vertex with $j$
gives a bijection from the new $s$-cliques containing it to the old
$s$-cliques containing $j$.
Let $y$ be
obtained from $x$ by replacing both $x_i$ and $x_k$ with $x_j$, and put

\[
 D=\sum_w y_w^s=1-x_i^s-x_k^s+2x_j^s>0.
\]

Define
\[
 L_s(i,k)=\sum_{\substack{C\in\Cs(G_0)\\i,k\in C}}q_x^{(s)}(C).
\]
The old cliques containing $i$ or $k$ contribute
$L_s(i)+L_s(k)-L_s(i,k)$. The new cliques containing $i$ or $k$ contribute
$2L_s(j)$. Therefore
\[
 P_{\alpha,s}(G'',y)
 =\rho_{\alpha,s}(G_0)-L_s(i)-L_s(k)+L_s(i,k)+2L_s(j).
\]
Using $Z_s(i)=Z_s(j)=Z_s(k)$ and applying~\eqref{eq:hypergraph-variational}
to $y/D^{1/s}$, we obtain
\[
 \rho_{\alpha,s}(G'')-\rho_{\alpha,s}(G_0)
 \geq\frac{-Z_s(i)-Z_s(k)+2Z_s(j)+L_s(i,k)}{D}
 =\frac{L_s(i,k)}{D}>0,
\]
since $ik$ lies in an $s$-clique and $x>0$. This contradicts extremality.
Thus nonadjacency is transitive. Declare each vertex nonadjacent
to itself. Nonadjacency is then an equivalence relation. Each equivalence
class is an independent set, and vertices in different classes are adjacent. Hence $G_0$ is complete
multipartite.

Let $t$ be the number of parts of $G_0$. Since $G_0$ contains an $s$-clique
and is $K_{r+1}$-free, $s\leq t\leq r$. Suppose $t<r$. As $n\geq r$,
some part has at least two vertices. Split one vertex from this part into
a new part, and denote the resulting complete $(t+1)$-partite graph by
$G_1$. It is $K_{r+1}$-free, and $\mathcal H_s(G_1)$ is connected and
contains $\mathcal H_s(G_0)$ as a proper subhypergraph.
Lemma~\ref{lem:subhypergraph-monotonicity} gives
$\rhoas(G_1)>\rhoas(G_0)$, a contradiction. Hence $t=r$.
Finally, if an edge of $G$ was deleted in forming $G_0$, it must lie within
one of the $r$ parts. Its endpoints, together with one vertex from each of
the other $r-1$ parts, form a $K_{r+1}$ in $G$, a contradiction. Therefore
$G=G_0$ is complete $r$-partite.
\end{proof}

\subsection{Asymptotic optimization of the part sizes}

From this subsection onward, we assume $1\leq n_1\leq\cdots\leq n_r$
for $K_{n_1,\ldots,n_r}$. Permuting the part sizes gives an isomorphic
graph with the same $\alpha$-$s$-clique spectral radius.

We first compute the $\alpha$-$r$-clique spectral radius when all but one part have equal size.
Assume $s=r$. For positive integers $m\leq q$, let
\[
H_{m,q}:=\overline K_m\vee T_{r-1}((r-1)q).
\]
Thus $H_{m,q}$ is the complete $r$-partite graph with part sizes
$m,q,\ldots,q$.

\begin{proposition}
\label{prop:multipartite}
Let $r\geq3$ and $0<\alpha<1$, and let $m\leq q$ be positive integers.
Put $u=m/q$. Let $\tau$ be the unique solution in $(0,1]$ of
\begin{equation}\label{eq:tau}
(1-\alpha)\tau^r+\alpha\tau
=
u\bigl(\alpha\tau+1-\alpha\bigr).
\end{equation}
Then
\begin{equation}\label{eq:multipartite-radius}
\rho_{\alpha,r}(H_{m,q})
=
q^{r-1}
\bigl(\alpha+(1-\alpha)\tau^{r-1}\bigr).
\end{equation}
In particular, for every fixed $m$,
\begin{equation}\label{eq:fixed-m-asymptotic}
\rho_{\alpha,r}(H_{m,q})
=
\alpha q^{r-1}
+
\frac{(1-\alpha)^r m^{r-1}}{\alpha^{r-1}}
+O_{\alpha,r,m}(q^{-1}),
\qquad (q\to\infty).
\end{equation}
\end{proposition}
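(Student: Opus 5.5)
Since $H_{m,q}$ is complete $r$-partite, $\mathcal H_r(H_{m,q})$ is connected: any two vertices lie in a common transversal, or are joined through one. So Lemma~\ref{lem:perron-frobenius} gives a unique positive $\alpha$-Perron vector $x$. The automorphism group permutes freely within each part and permutes the $r-1$ parts of size $q$. By uniqueness, $x$ is constant on the small part, say $x_v=a$, and constant on all large parts, say $x_v=b$. I would then write the coordinate equation \eqref{eq:clique-coordinate} at each type of vertex, using the clique degrees $d_r(v)=q^{r-1}$ for $v$ in the small part and $d_r(v)=mq^{r-2}$ for $v$ in a large part. Writing $\rho=\rho_{\alpha,r}(H_{m,q})$, this gives
\[
\rho a^{r-1}=\alpha q^{r-1}a^{r-1}+(1-\alpha)q^{r-1}b^{r-1},\qquad
\rho b^{r-1}=\alpha mq^{r-2}b^{r-1}+(1-\alpha)mq^{r-2}ab^{r-2}.
\]

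Next I would put $\tau=a/b$ and divide through. The first equation becomes $\rho=q^{r-1}(\alpha+(1-\alpha)\tau^{1-r})$. Here the natural variable is $b/a$ rather than $a/b$, so I would set $\tau=b/a$ instead. Then $\rho=q^{r-1}(\alpha+(1-\alpha)\tau^{r-1})$, which is \eqref{eq:multipartite-radius}. The second equation reads $\rho=mq^{r-2}(\alpha+(1-\alpha)/\tau)$. Equating the two expressions and multiplying by $\tau/q^{r-1}$ gives
\[
(1-\alpha)\tau^r+\alpha\tau=u(\alpha\tau+1-\alpha),
\]
which is exactly \eqref{eq:tau}. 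This identifies $\tau$ as a positive root of \eqref{eq:tau}.

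It remains to show the root is unique and lies in $(0,1]$. Set $f(\tau)=(1-\alpha)\tau^r+\alpha(1-u)\tau-u(1-\alpha)$. Then $f(0)=-u(1-\alpha)<0$ and $f(1)=1-u\ge0$ because $m\le q$. Since $u\le1$, $f$ is strictly increasing on $(0,\infty)$, so $f$ has exactly one positive root, and that root lies in $(0,1]$. Hence the Perron ratio $b/a$ is this $\tau$. Everything here is routine. The one point needing care is the symmetry argument: it relies on uniqueness of the positive eigenvector together with automorphisms permuting the large parts, which are available since all large parts have the same size $q$.

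For \eqref{eq:fixed-m-asymptotic}, fix $m$ and let $q\to\infty$, so $u=m/q\to0$. From \eqref{eq:tau}, $\tau^r=O(u)$ forces $\tau=O(u)$ to leading order. More precisely, $\alpha\tau=u(1-\alpha)+O(u\tau)+O(\tau^r)$, so $\tau=\frac{(1-\alpha)u}{\alpha}(1+O(u))$ because $r\ge3$. Then $q^{r-1}(1-\alpha)\tau^{r-1}=(1-\alpha)^r\alpha^{1-r}m^{r-1}(1+O(1/q))$, which gives the stated expansion with error $O_{\alpha,r,m}(q^{-1})$. The main (mild) obstacle is making this error term rigorous, i.e.\ bounding $\tau$ above and below by $\frac{(1-\alpha)u}{\alpha}(1+O(u))$. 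I would do this via the implicit function theorem at $u=0$, or by directly plugging the bounds into the monotone $f$.
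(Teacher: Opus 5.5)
Your proposal is correct and follows the paper's proof essentially step for step: the same automorphism argument reducing the Perron vector to two values $a,b$, the same two coordinate equations with $\tau=b/a$, the same monotonicity argument placing the unique positive root in $(0,1]$, and the same first-order expansion of $\tau$ in $u$. One small fix: $\tau^r=O(u)$ alone only gives $\tau=O(u^{1/r})$, so take the a priori bound from $\alpha\tau\le(1-\alpha)\tau^r+\alpha\tau=u(\alpha\tau+1-\alpha)\le u$, which uses $\tau\le1$ and is what the paper does; your rearrangement then yields $\tau=\frac{1-\alpha}{\alpha}u+O(u^2)$ directly, with no need for the implicit function theorem.
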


\begin{proof}
The $r$-clique hypergraph $\mathcal H_r(H_{m,q})$ is connected.
Its $\alpha$-Perron vector is invariant under automorphisms and has a common coordinate $a$ on
the part of size $m$ and a common coordinate $b$ on the parts of size $q$. Put $\lambda=\rho_{\alpha,r}(H_{m,q})$ and $\tau=b/a$.
By~\eqref{eq:clique-coordinate}, the two eigenvalue equations are
\[
 \lambda a^{r-1}=\alpha q^{r-1}a^{r-1}
 +(1-\alpha)q^{r-1}b^{r-1},
\]
and
\[
 \lambda b^{r-1}=\alpha m q^{r-2}b^{r-1}
 +(1-\alpha)m q^{r-2}ab^{r-2}.
\]
Dividing the two equations by $a^{r-1}$ and $b^{r-1}$, respectively, gives
\[
 \lambda=q^{r-1}\bigl(\alpha+(1-\alpha)\tau^{r-1}\bigr),
 \qquad
 \lambda=mq^{r-2}\bigl(\alpha+(1-\alpha)/\tau\bigr).
\]
Equating the two expressions for $\lambda$ gives
$(1-\alpha)\tau^r+\alpha\tau=u(\alpha\tau+1-\alpha)$.
To prove that this equation has a unique solution in $(0,1]$, let
\[
 g(\tau)=(1-\alpha)\tau^r+\alpha\tau
 -u(\alpha\tau+1-\alpha).
\]
For every $\tau>0$,
\[
 g'(\tau)=r(1-\alpha)\tau^{r-1}+\alpha(1-u)>0.
\]
Hence $g$ is strictly increasing on $(0,\infty)$. Since
$g(0)=-u(1-\alpha)<0$ and $g(1)=1-u\geq0$, it has exactly one positive
zero, which lies in $(0,1]$.

For fixed $m$, we have $u\to0$ as $q\to\infty$. Since $0<\tau\leq1$,
$(1-\alpha)\tau^r+\alpha\tau=u(\alpha\tau+1-\alpha)\leq u$.
Thus $\alpha\tau\leq u$ and $\tau=O_\alpha(u)$.
Rewriting the equation as
$\alpha\tau=(1-\alpha)u+\alpha u\tau-(1-\alpha)\tau^r$, the last two
terms are $O_{\alpha,r}(u^2)$. Hence
$\tau=((1-\alpha)/\alpha)u+O_{\alpha,r}(u^2)$. Raising this expansion
to the $(r-1)$st power and substituting into
$\rho_{\alpha,r}(H_{m,q})=q^{r-1}(\alpha+(1-\alpha)\tau^{r-1})$ gives
\[
 \rho_{\alpha,r}(H_{m,q})
 =\alpha q^{r-1}
 +\frac{(1-\alpha)^r m^{r-1}}{\alpha^{r-1}}
 +O_{\alpha,r,m}(q^{-1}),
\]
which completes the proof.
\end{proof}

For a general complete $r$-partite graph $G=K_{n_1,\ldots,n_r}$, put
\[
 N=\prod_{i=1}^r n_i,
 \qquad d_i=\prod_{j\ne i}n_j=\frac{N}{n_i}.
\]
Here $d_i$ is the $r$-clique degree of a vertex in part $i$.
The ordering gives $d_1\geq\cdots\geq d_r$.
The connected $r$-clique hypergraph $\mathcal H_r(G)$ has an $\alpha$-Perron vector
invariant under automorphisms and hence constant on each part. Write its coordinate
on part $i$ as $a_i>0$.
Equation~\eqref{eq:clique-coordinate} gives
\[
 \bigl(\rho_{\alpha,r}(G)-\alpha d_i\bigr)a_i^{r-1}
 =(1-\alpha)d_i\prod_{j\ne i}a_j.
\]
By~\eqref{eq:degree-bounds},
$\rho_{\alpha,r}(G)>\alpha d_1$. Multiplying the equations and
cancelling the positive coordinates gives
\begin{equation}\label{eq:multipartite-characteristic}
 \prod_{i=1}^r\bigl(\rho_{\alpha,r}(G)-\alpha d_i\bigr)
 =(1-\alpha)^rN^{r-1}.
\end{equation}
Let $f(t)=\prod_{i=1}^r(t-\alpha d_i)$. Then $f$ is strictly increasing
for $t>\alpha d_1$. Hence~\eqref{eq:multipartite-characteristic}
has exactly one root in $(\alpha d_1,\infty)$.

\begin{lemma}\label{lem:root-comparison}
Let $r\geq3$, $0<\alpha<1$, and let $1\leq n_1\leq\cdots\leq n_r$ be integers.
Put $d_i=\prod_{j\ne i}n_j$ and $N=\prod_i n_i$. If a real number $t$
satisfies
\[
 t>\alpha d_1,
 \qquad
 \prod_i(t-\alpha d_i)-(1-\alpha)^rN^{r-1}<0,
\]
then $t<\rho_{\alpha,r}(K_{n_1,\ldots,n_r})$.
\end{lemma}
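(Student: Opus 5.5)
The plan is to use the characterization of $\rho:=\rho_{\alpha,r}(K_{n_1,\ldots,n_r})$ derived just before the lemma. We know $\rho$ is the unique root of \eqref{eq:multipartite-characteristic} in $(\alpha d_1,\infty)$. We also know that $f(t)=\prod_{i=1}^r(t-\alpha d_i)$ is strictly increasing on that interval. So put $F(t)=f(t)-(1-\alpha)^rN^{r-1}$; it is strictly increasing on $(\alpha d_1,\infty)$ and vanishes exactly at $\rho$.

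First I will recall why $\rho>\alpha d_1$. The graph $K_{n_1,\ldots,n_r}$ with $n_i\ge1$ has a connected $r$-clique hypergraph. Its maximum clique degree is $d_1$, because $n_1$ is the smallest part and so $d_1=N/n_1$ is largest. Hence \eqref{eq:degree-bounds} gives $\rho>\alpha d_1$.

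Next I will justify monotonicity of $F$. On $(\alpha d_1,\infty)$ we have $t-\alpha d_i\ge t-\alpha d_1>0$ for every $i$, since $d_1\ge d_i$. So $f$ is a product of positive, strictly increasing linear factors, and is therefore strictly increasing there.

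For the conclusion, suppose $t>\alpha d_1$ and $F(t)<0$. If $t\ge\rho$, then both $t$ and $\rho$ lie in $(\alpha d_1,\infty)$, so monotonicity gives $F(t)\ge F(\rho)=0$. This contradicts $F(t)<0$, hence $t<\rho$. The argument has no real obstacle; the only point needing care is that $\rho$ does satisfy \eqref{eq:multipartite-characteristic}. That was derived from the positive, part-constant Perron vector by multiplying the $r$ coordinate equations, and the cancellation is legitimate because every $a_i>0$ and $\rho-\alpha d_i>0$.
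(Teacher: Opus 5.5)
Your proof is correct and follows essentially the same route as the paper. Both arguments use two facts: $\rho_{\alpha,r}(K_{n_1,\ldots,n_r})>\alpha d_1$ satisfies \eqref{eq:multipartite-characteristic}, and $\prod_i(t-\alpha d_i)$ is strictly increasing on $(\alpha d_1,\infty)$, so any admissible $t$ at which the product falls below $(1-\alpha)^rN^{r-1}$ lies to the left of that root; your explicit check that $\rho>\alpha d_1$ via connectedness and \eqref{eq:degree-bounds} just spells out what the paper uses implicitly.
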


\begin{proof}
On the interval $t>\alpha d_1$, every factor in the product is
positive and strictly increasing in $t$. At
$t=\rho_{\alpha,r}(K_{n_1,\ldots,n_r})$, the product equals
$(1-\alpha)^rN^{r-1}$ by~\eqref{eq:multipartite-characteristic}.
The product at $t$ is smaller, so
$t<\rho_{\alpha,r}(K_{n_1,\ldots,n_r})$.
\end{proof}

A set $D\subseteq\mathbb R^r$ is symmetric if it is invariant under
coordinate permutations. A function on $D$ is symmetric if its value
is unchanged by these permutations. For $x,y\in\mathbb R^r$, let
$x_{[1]}\geq\cdots\geq x_{[r]}$ and
$y_{[1]}\geq\cdots\geq y_{[r]}$ denote their coordinates in
nonincreasing order. We write $x\prec y$ and say that $x$ is majorized by $y$ if
$\sum_{i=1}^k x_{[i]}\leq\sum_{i=1}^k y_{[i]}$ for
$1\leq k<r$ and $\sum_i x_i=\sum_i y_i$.
A function $f:D\to\mathbb R$ is Schur-convex if $x\prec y$ implies
$f(x)\leq f(y)$ for all $x,y\in D$.
It is strictly Schur-convex if, in addition, $f(x)<f(y)$ whenever
$x\prec y$ and $x$ is not a coordinate permutation of $y$.

We use the Schur--Ostrowski criterion in
\cite[Chapter~3, A.4 and~A.4.a]{MarshallOlkinArnold}, together with the
strict form obtained from A.2.a and~A.5 there.

\begin{lemma}[\cite{MarshallOlkinArnold}]
\label{lem:schur-ostrowski}
Let $D\subseteq\mathbb R^r$ be a symmetric convex set with nonempty interior,
and let $f:D\to\mathbb R$ be continuous and symmetric on $D$.
Assume that $f$ is continuously differentiable in the interior of $D$.
Then $f$ is Schur-convex on $D$ if and only if
\[
(x_i-x_j)\left(
\frac{\partial f}{\partial x_i}(x)
-\frac{\partial f}{\partial x_j}(x)
\right)\geq0
\]
for every $x$ in the interior of $D$ and all $i,j$.
If $f$ extends to a continuously differentiable function on an open
neighborhood of $D$ and the inequality is strict for every $x\in D$
whenever $x_i\ne x_j$, then $f$ is strictly Schur-convex on $D$.
\end{lemma}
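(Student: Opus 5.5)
We would prove the criterion by reducing majorization to elementary two-coordinate transfers and then integrating along each transfer. The key structural input is the classical Hardy--Littlewood--P\'olya/Muirhead fact. If $x\prec y$, then $x$ is obtained from $y$ by finitely many (at most $r-1$) \emph{T-transforms}. Each T-transform replaces a pair of coordinates $(y_i,y_j)$ with $y_i>y_j$ by $(y_i-t,\,y_j+t)$ for some $0\leq t\leq (y_i-y_j)/2$ and leaves the other coordinates unchanged. Each T-transform of $z$ is a convex combination of $z$ and $z$ with coordinates $i,j$ swapped. Since $D$ is symmetric and convex, every intermediate vector stays in $D$, and so does the entire segment joining consecutive vectors. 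We would quote this decomposition from \cite{MarshallOlkinArnold} or reprove it by the standard induction on the number of coordinates where $x$ and $y$ differ.

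\emph{Necessity.} Suppose $f$ is Schur-convex and $x$ is interior with $x_i>x_j$. For small $t>0$, the vector $x-t(e_i-e_j)$ lies in $D$ and is majorized by $x$. Hence $f(x-t(e_i-e_j))\leq f(x)$. Dividing by $t$ and letting $t\to0^+$ gives $\partial_if(x)-\partial_jf(x)\geq0$. The case $x_i=x_j$ is trivial, and the case $x_i<x_j$ follows by swapping $i$ and $j$.

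\emph{Sufficiency.} Along one T-transform, set $z(t)=z-t(e_i-e_j)$ for $0\leq t\leq (z_i-z_j)/2$. Then $z(t)_i-z(t)_j=z_i-z_j-2t\geq0$, and
\[
\frac{d}{dt}f(z(t))=-\bigl(\partial_if-\partial_jf\bigr)(z(t))\leq0
\]
wherever $z(t)$ is interior. So $f$ does not increase along the transfer, and chaining the transfers gives $f(x)\leq f(y)$.

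The only technical issue is that the segment may touch the boundary of $D$, where the derivative is not assumed to exist. We would handle this by shrinking toward a symmetric interior point. Take an interior point $p$ and let $c$ be the average of its coordinate permutations. Then $c$ is interior, by convexity and symmetry, and $c$ has all coordinates equal. For $0<\varepsilon<1$, the points $(1-\varepsilon)w+\varepsilon c$ with $w\in D$ are interior. Because $c$ is a constant vector, majorization is preserved: $(1-\varepsilon)x+\varepsilon c\prec(1-\varepsilon)y+\varepsilon c$. The T-transform segments between these shrunk points are images of the original segments under the same affine map, so they lie in the interior. Applying the interior argument and letting $\varepsilon\to0$, continuity of $f$ gives $f(x)\leq f(y)$.

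\emph{Strict form.} Now $f$ is $C^1$ on a neighborhood of $D$, so the derivative computation is valid on the closed segments. Along a nontrivial transfer ($t>0$), we have $z(s)_i>z(s)_j$ for $0\leq s<t$. The strict hypothesis then makes $\frac{d}{ds}f(z(s))<0$ on $[0,t)$, which gives a strict decrease. If $x$ is not a permutation of $y$, at least one transfer in the chain is nontrivial, and the others do not increase $f$, so $f(x)<f(y)$.

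We expect the main obstacle to be bookkeeping rather than ideas. One must state the T-transform decomposition precisely so that all intermediate points lie in $D$, and one must justify the boundary approximation in the non-strict case. Both are handled by convexity, symmetry, and the constant-vector shrinking above.
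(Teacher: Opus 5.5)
The paper gives no proof of this lemma; it imports the criterion from Marshall--Olkin--Arnold. Your argument is a correct, self-contained proof along the standard lines: reduce $x\prec y$ to a chain of two-coordinate transfers, then read off the sign of the derivative along each transfer from the pairwise condition.

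Compared with the bare citation, your proof makes two points explicit. First, in the non-strict case the transfer segments may run along $\partial D$. Your shrinking toward the constant interior vector $c$ handles this, because the affine map commutes with every T-transform ($Tc=c$). Second, in the strict case the $C^1$ extension is genuinely needed, since passing to the limit after shrinking would lose strictness. This is exactly the situation in the paper's application. In the proof of Lemma~\ref{lem:multipartite-product}, both $y$ and $(1,\ldots,1)$ lie on the face $\sum_i y_i=r$ of $\Omega$. That face is contained in $\partial\Omega$, so every transfer segment lies in $\partial\Omega$, where only the extension supplies derivatives.

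One statement needs a small repair. The decomposition into transfers with $0\leq t\leq (y_i-y_j)/2$ holds for the decreasing rearrangements $x^{\downarrow},y^{\downarrow}$, not for $x,y$ themselves. Such transfers can never turn $y$ into a nontrivial permutation of itself, because each nontrivial one strictly lowers $\sum_k z_k^2$. Using the symmetry of $D$ and of $f$, replace $x,y$ by $x^{\downarrow},y^{\downarrow}$ at the outset. Then ``$x$ is not a permutation of $y$'' means exactly $x^{\downarrow}\neq y^{\downarrow}$, which is what your strict argument uses.
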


\noindent\begin{minipage}{\linewidth}
\setlength{\parindent}{1.5em}
To bound $n^{1-r}\rho_{c_r,r}(K_{n_1,\ldots,n_r})$ in
Proposition~\ref{prop:multipartite-phase}, we need the following product inequality.

\begin{lemma}
\label{lem:multipartite-product}
Let $r\geq3$, and let $p_1,\ldots,p_r>0$ satisfy $\sum_{i=1}^r p_i=1$.
Set $B_r=r^{1-r}$, $c_r=((r-1)/r)^{r-1}$, and
$\widehat d_i=\prod_{j\ne i}p_j$.
Then each factor on the left-hand side below is positive, and
\begin{equation}\label{eq:multipartite-product}
\prod_{i=1}^r
\frac{B_r/\widehat d_i-c_r}{1-c_r}
\geq1.
\end{equation}
Equality holds if and only if $p_1=\cdots=p_r=1/r$.
\end{lemma}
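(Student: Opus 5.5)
The plan is to take logarithms and show that the symmetric function
\[
\Phi(p)=\sum_{i=1}^r\log\Bigl(\frac{B_r}{\widehat d_i}-c_r\Bigr)
\]
on the open simplex $\{p\in\mathbb R_{>0}^r:\sum_ip_i=1\}$ attains its minimum exactly at the uniform point. There every $\widehat d_i=r^{1-r}=B_r$, so each factor in \eqref{eq:multipartite-product} equals $(1-c_r)/(1-c_r)=1$.

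\emph{Positivity of the factors.} By AM--GM applied to the $r-1$ numbers $p_j$ with $j\neq i$,
\[
\widehat d_i\le\Bigl(\frac{1-p_i}{r-1}\Bigr)^{r-1}<(r-1)^{1-r},
\]
where the last inequality uses $p_i>0$. Hence $B_r/\widehat d_i>r^{1-r}(r-1)^{r-1}=c_r$. Since $c_r<1$, every factor in \eqref{eq:multipartite-product} is positive.

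\emph{Rewriting the factors.} Put $P=\prod_jp_j$, so $\widehat d_i=P/p_i$ and $B_r/\widehat d_i=B_rp_i/P$. Two facts guide the argument. If some $p_i\to0$, then $P\to0$ while each $\widehat d_i$ with $i$ different from that index also tends to $0$. So that factor blows up and the product tends to $+\infty$; hence a minimum of $\Phi$ is attained in the interior. The AM--GM bound used above is too lossy for the main inequality itself: after replacing $\widehat d_i$ by $((1-p_i)/(r-1))^{r-1}$ the resulting one-variable function tends to $-\infty$ near $0$, so that route fails. I would therefore work with the exact expression.

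\emph{Critical point analysis.} Differentiating, the $j=i$ term of $\Phi$ has zero derivative in $p_i$, because $\partial(B_rp_i/P)/\partial p_i=0$. For $j\neq i$, $\partial(B_rp_j/P)/\partial p_i=-B_rp_j/(Pp_i)$. With
\[
w_j=\frac{1}{1-c_r\widehat d_j/B_r}>1,\qquad S=\sum_jw_j,
\]
this gives
\[
\frac{\partial\Phi}{\partial p_i}=-\frac{S-w_i}{p_i}.
\]
At an interior critical point of $\Phi$ restricted to $\sum p_i=1$, the Lagrange condition says $(S-w_i)/p_i$ is independent of $i$. I would then argue this forces all $p_i$ equal. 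If $p_i>p_j$, then $\widehat d_i<\widehat d_j$, hence $w_i<w_j$ and $S-w_i>S-w_j$. Then $(S-w_i)/p_i=(S-w_j)/p_j$ requires $p_i/p_j=(S-w_i)/(S-w_j)$, and I would aim to show the right side is strictly smaller than $p_i/p_j$. This is equivalent to
\[
\frac{w_j-w_i}{S-w_j}<\frac{p_i-p_j}{p_j},
\]
which I would try to obtain by bounding $w_j-w_i$ through the mean value theorem in $\widehat d$. This uses that $\widehat d_j-\widehat d_i=\widehat d_ip_j^{-1}(p_i-p_j)$, together with $w\ge1$ and $S\ge r$.

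\emph{Conclusion and main obstacle.} Combined with the boundary blow-up, uniqueness of the critical point gives the global minimum $\Phi=r\log(1-c_r)$ only at the uniform point, which is the inequality with its equality case. Alternatively, the same computation can be phrased via Lemma~\ref{lem:schur-ostrowski}, applied to $-\Phi$ in the sign condition $(p_i-p_j)(\partial_i\Phi-\partial_j\Phi)\le0$. The main obstacle is this comparison: the displayed inequality for $(w_j-w_i)/(S-w_j)$ must hold uniformly, including when $c_r\widehat d_j/B_r$ is close to $1$ and $w_j$ is large. If the direct estimate fails there, I would first try to show that this regime cannot contain a critical point, by using the constraint $\widehat d_j<(r-1)^{1-r}$ to bound $w_j$. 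Failing that, I would reduce to two unequal values via a smoothing argument that replaces $p_i,p_j$ by their average and shows $\Phi$ does not increase.
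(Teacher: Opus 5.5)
Your reduction is correct and matches the paper's computation. Under $y_i=rp_i$ your $\Phi$ is exactly the paper's $L$, and your formula $\partial_i\Phi=-(S-w_i)/p_i$ is its derivative formula. The inequality you display for $p_i>p_j$ is precisely the strict Schur--Ostrowski condition $(p_i-p_j)(\partial_i\Phi-\partial_j\Phi)>0$. The gap is that this inequality, which carries the whole argument, is left unproved, and the tools you name cannot prove it.

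Put $z_k=c_r\widehat d_k/B_r=(r-1)^{r-1}\widehat d_k$, so $w_k=1/(1-z_k)$. Since $z_j-z_i=z_i(p_i-p_j)/p_j$, the exact difference is $w_j-w_i=w_iw_j(z_j-z_i)$. Your inequality is therefore equivalent to
\[
w_iw_jz_i<S-w_j=w_i+\sum_{k\ne i,j}w_k .
\]
The facts $w\ge1$, $S\ge r$ and $z_j<1$ only bound the right side from below. They give no control of $w_jz_i$, which is unbounded if $z_i$ and $z_j$ are treated separately. The dangerous regime does occur. Near $(0,1/(r-1),\ldots,1/(r-1))$, take $p_j=\varepsilon$ and the other coordinates equal to $(1-\varepsilon)/(r-1)$. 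Then $w_j\approx1/((r-1)\varepsilon)$ and $z_i\approx(r-1)\varepsilon$. After bounding the intermediate value $w(\xi)$ by $w_j$, your mean-value estimate needs $z_iw_j^2\approx1/((r-1)\varepsilon)$ to be less than $S-w_j\approx r-1$, which fails. Your fallbacks do not help. The bound $\widehat d_j<(r-1)^{1-r}$ gives only $z_j<1$, so $w_j$ is not bounded on the open simplex. Smoothing $p_i,p_j$ toward their average needs the same inequality, because the derivative along that segment is $\partial_i\Phi-\partial_j\Phi$.

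The missing idea is a \emph{pairwise} AM--GM bound that couples $z_i$ and $z_j$. The $r-1$ positive numbers $p_i+p_j$ and $p_k$ $(k\ne i,j)$ sum to $1$, so $z_i+z_j=(r-1)^{r-1}(p_i+p_j)\prod_{k\ne i,j}p_k\le1$. Then $w_jz_i=z_i/(1-z_j)\le1$, so the left side above is at most $w_i$. The right side exceeds $w_i$ because $r\ge3$ makes $\sum_{k\ne i,j}w_k$ nonempty. This is the key estimate in the paper's proof, and with it your critical-point argument goes through.

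Two smaller points also need repair. First, your boundary blow-up argument is incomplete. As $p_k\to0$ the $k$-th factor can itself tend to $0$; near the point above it is about $c_r(r-1)\varepsilon$. Also, for $r=2$ the left side of \eqref{eq:multipartite-product} is identically $1$, so $r\ge3$ must be used. A correct version writes the product of the unnormalized factors as $B_r^rP^{1-r}\prod_i(1-z_i)$. Using $1-z_i\ge1-(1-p_i)^{r-1}\ge p_i$ gives the lower bound $B_r^rP^{2-r}$, which tends to infinity. Second, in your Schur--Ostrowski alternative the sign is reversed. Your target inequality says that $\Phi$ itself is Schur-convex, which is what places the minimum at the uniform point. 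Moreover, Lemma~\ref{lem:schur-ostrowski} requires a domain with nonempty interior, such as $\{p>0:\sum_ip_i\le1\}$, on which the pairwise bound still holds.
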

\end{minipage}

\begin{proof}
Put $y_i=rp_i$ and $Y=\prod_{i=1}^r y_i$. Then $\sum_i y_i=r$ and
$B_r/\widehat d_i=y_i/Y$, so it suffices to prove
\[
 \prod_{i=1}^r\left(\frac{y_i}{Y}-c_r\right)
 \ge(1-c_r)^r.
\]
We will show that $\log\prod_{i=1}^r(y_i/Y-c_r)$ is strictly Schur-convex,
so it is minimized at $y=(1,\ldots,1)$ under $\sum_i y_i=r$.
Since Lemma~\ref{lem:schur-ostrowski} requires a domain with nonempty
interior, we work on the larger symmetric convex set
\[
 \Omega=\left\{y\in\mathbb R_{>0}^r:\sum_{i=1}^r y_i\leq r\right\},
\]
which contains every $y>0$ with $\sum_i y_i=r$ and has nonempty interior.
For $y\in\Omega$, set $z_i=c_rY/y_i=c_r\prod_{j\ne i}y_j$.
The arithmetic--geometric mean (AM--GM) inequality gives
\[
 z_i=c_r\prod_{j\ne i}y_j
 \leq c_r\left(\frac{r-y_i}{r-1}\right)^{r-1}<1.
\]
Thus every factor $y_i/Y-c_r=(y_i/Y)(1-z_i)$ is positive.
We can therefore define
\[
 L(y)=\log\prod_{i=1}^r\left(\frac{y_i}{Y}-c_r\right)
 =-(r-1)\log Y+\sum_{i=1}^r\log(1-z_i).
\]
The strict inequalities $y_i>0$ and $z_i<1$ also show that $L$ extends
smoothly to an open neighborhood of $\Omega$.

To compare two partial derivatives, we need one further bound.
For $i\ne j$, the $r-1$ positive numbers $y_i+y_j$ and $y_\ell$ with
$\ell\ne i,j$ have sum at most $r$. Another application of AM--GM gives
\[
 z_i+z_j
 =c_r(y_i+y_j)\prod_{\ell\ne i,j}y_\ell
 \leq c_r\left(\frac r{r-1}\right)^{r-1}=1.
\]
Since $\partial z_i/\partial y_i=0$ and
$\partial z_j/\partial y_i=z_j/y_i$ for $j\ne i$, we obtain
\[
 \frac{\partial L}{\partial y_i}
 =-\frac{r-1}{y_i}-\frac1{y_i}\sum_{j\ne i}\frac{z_j}{1-z_j}
 =-\frac1{y_i}\sum_{j\ne i}\frac1{1-z_j}.
\]
Using $y_i z_i=y_j z_j=c_rY$, subtraction and simplification give
\[
(y_i-y_j)
\left(
\frac{\partial L}{\partial y_i}
-
\frac{\partial L}{\partial y_j}
\right)
=
\frac{(y_i-y_j)^2}{y_iy_j}
\left(
\sum_{k\ne i,j}\frac1{1-z_k}
+\frac{1-z_i-z_j}{(1-z_i)(1-z_j)}
\right).
\]
The sum in parentheses is positive: it has $r-2\geq1$ positive terms,
and the last term is nonnegative because $z_i+z_j\leq1$.
Hence the derivative condition is strict whenever $y_i\ne y_j$.
Lemma~\ref{lem:schur-ostrowski} now shows that $L$ is strictly
Schur-convex on $\Omega$.

Finally, if $\sum_i y_i=r$, the equal-coordinate vector satisfies
$(1,\ldots,1)\prec y$. Strict Schur-convexity therefore gives
\[
 L(y)\geq L(1,\ldots,1)=r\log(1-c_r),
\]
with equality if and only if $y=(1,\ldots,1)$.
Since $\exp L(y)=\prod_{i=1}^r(y_i/Y-c_r)$, substituting $y_i=rp_i$ gives
\[
 \prod_{i=1}^r\frac{B_r/\widehat d_i-c_r}{1-c_r}
 =\frac{\exp L(y)}{(1-c_r)^r}\geq1,
\]
with equality if and only if $p_i=1/r$ for every $i$.
\end{proof}

Fix an integer $r\geq3$ and $0<\alpha<1$, and put
$B_r=r^{1-r}$ and $M_\alpha=\max\{B_r,\alpha(r-1)^{1-r}\}$.
For $G=K_{n_1,\ldots,n_r}$ on $n=\sum_i n_i$ vertices, put
$p_i=n_i/n$ and $p=(p_1,\ldots,p_r)$.
Thus $p_i$ is the proportion of vertices in part $i$.
Let $\Delta_r=\{p\in\mathbb R^r:p_i\geq0\text{ for every }i,
\ \sum_{i=1}^r p_i=1\}$.
Put $\Delta_r^\uparrow=\{p\in\Delta_r:p_1\leq\cdots\leq p_r\}$,
$b_r=(1/r,\ldots,1/r)$, and
$q_r=(0,1/(r-1),\ldots,1/(r-1))$. Define
\[
 \mathcal E_\alpha=
 \begin{cases}
  \{b_r\},&0<\alpha<c_r,\\
  \{b_r,q_r\},&\alpha=c_r,\\
  \{q_r\},&c_r<\alpha<1,
 \end{cases}
 .
\]
The Euclidean norm of $z=(z_1,\ldots,z_r)\in\mathbb R^r$ is
$\|z\|_2=(\sum_{i=1}^r z_i^2)^{1/2}$.
For a nonempty set $S\subseteq\mathbb R^r$, the distance from $p$ to $S$ is
$\operatorname{dist}(p,S)=\inf_{q\in S}\|p-q\|_2$.

\begin{proposition}
\label{prop:multipartite-phase}
Let $r\geq3$ be an integer and $0<\alpha<1$.
Then
\begin{equation}\label{eq:multipartite-asymptotic}
 \lim_{n\to\infty}n^{1-r}
 \max_{\substack{n_1+\cdots+n_r=n\\1\leq n_1\leq\cdots\leq n_r}}
 \rho_{\alpha,r}(K_{n_1,\ldots,n_r})
 =M_\alpha.
\end{equation}
Moreover, let $(G_n)_{n\geq r}$ be a sequence of $n$-vertex complete
$r$-partite graphs. For each $n$, write $G_n=K_{n_1,\ldots,n_r}$
and put $p_n=(n_1/n,\ldots,n_r/n)$.
If $(G_n)_{n\geq r}$ is asymptotically extremal, meaning
$n^{1-r}\rho_{\alpha,r}(G_n)\to M_\alpha$ as $n\to\infty$, then
\[
 \operatorname{dist}\bigl(p_n,\mathcal E_\alpha\bigr)\longrightarrow0.
\]
\end{proposition}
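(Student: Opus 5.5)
The plan is to reduce everything to a continuous optimization over the simplex $\Delta_r$. For $p\in\Delta_r$ with all $p_i>0$, put $\widehat d_i=\prod_{j\ne i}p_j$ and $\widehat N=\prod_i p_i$. Let $R_\alpha(p)$ be the unique root $t>\alpha\max_i\widehat d_i$ of
\[
 \prod_{i=1}^r(t-\alpha\widehat d_i)=(1-\alpha)^r\widehat N^{r-1}.
\]
Since $d_i=n^{r-1}\widehat d_i$ and $N=n^r\widehat N$, homogeneity of \eqref{eq:multipartite-characteristic} gives
\[
 n^{1-r}\rho_{\alpha,r}(K_{n_1,\ldots,n_r})=R_\alpha(p_n)
\]
exactly. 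Next, extend $R_\alpha$ to the boundary of $\Delta_r$ by the same root description, taking the largest root in the degenerate case. Continuity of simple roots of a monic polynomial in its coefficients then shows that $R_\alpha$ is continuous on all of $\Delta_r$. On the boundary the computation is explicit. If exactly one $p_i=0$, then $\widehat N=0$, and the root is $\alpha\prod_{j\ne i}p_j$. If two or more coordinates vanish, then $R_\alpha=0$. So the boundary maximum is $\alpha(r-1)^{1-r}$, attained only at the points obtained from $q_r$ by permuting coordinates (AM--GM).

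Once it is shown that $\max_{\Delta_r}R_\alpha=M_\alpha$ with maximizers exactly the permutations of $\mathcal E_\alpha$, both claims follow. The limit \eqref{eq:multipartite-asymptotic} holds because rational points with positive coordinates and denominator $n$ are dense, and because of continuity. For instance, part sizes $(\lceil n/r\rceil,\dots)$ approach $b_r$, and $(1,\text{balanced})$ approach $q_r$. Conversely, the upper bound is $R_\alpha(p_n)\le\max R_\alpha$. The convergence $\operatorname{dist}(p_n,\mathcal E_\alpha)\to0$ then follows by compactness: any subsequential limit of the sorted vectors $p_n\in\Delta_r^\uparrow$ is a maximizer lying in $\Delta_r^\uparrow$, hence in $\mathcal E_\alpha$.

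It remains to handle interior points. First, $R_\alpha(b_r)=B_r$, since the hypergraph is regular (Lemma~\ref{lem:hypergraph-bounds}). The key monotonicity fact is that, for fixed $p$, $R_\alpha(p)$ is convex in $\alpha$, or at least lies below the chord between $\alpha=0$ and $\alpha=1$. At those endpoints the values are $R_0(p)=\widehat N^{(r-1)/r}\le B_r$ (AM--GM) and $R_1(p)=\max_i\widehat d_i\le(r-1)^{1-r}$. I would justify convexity through Lemma~\ref{lem:variational}: $R_\alpha(p)$ is the maximum over $x$ of a function affine in $\alpha$, namely $\alpha P_1+(1-\alpha)P_0$ after normalizing. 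Convexity is easier to prove than a chord bound, and it gives
\[
 R_\alpha(p)\le(1-\alpha)B_r+\alpha(r-1)^{1-r}.
\]
This is too weak by itself. The threshold $c_r$ instead comes from Lemma~\ref{lem:multipartite-product}, whose role I would exploit as follows.

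At $\alpha=c_r$, Lemma~\ref{lem:root-comparison} and Lemma~\ref{lem:multipartite-product} combine to give $R_{c_r}(p)\le B_r$ on the interior, with equality only at $b_r$. Indeed, plug $t=B_r$ into the characteristic equation. Note that $c_r$ is chosen so that $B_r=c_r(r-1)^{1-r}$, which gives $t>c_r\widehat d_1$. Then \eqref{eq:multipartite-product} says
\[
 \prod_i(B_r-c_r\widehat d_i)\ge(1-c_r)^r\widehat N^{r-1},
\]
using $\prod_i\widehat d_i=\widehat N^{r-1}$. Monotonicity of the product, as in the proof of Lemma~\ref{lem:root-comparison} but reversed, yields $R_{c_r}(p)\le B_r$. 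To pass to other $\alpha$, I would use the convexity in $\alpha$ together with the value $R_0(p)\le B_r$ and $R_1(p)$, comparing $\alpha$ against $c_r$.

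For $\alpha<c_r$, interpolate between $0$ and $c_r$. Both endpoint values are at most $B_r$, so $R_\alpha(p)\le B_r$, with equality forcing $p=b_r$ via equality in AM--GM at $\alpha=0$.

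For $\alpha>c_r$, write $c_r$ as a convex combination of $\alpha$ and $0$. This gives
\[
 B_r\ge R_{c_r}(p)\ge\tfrac{c_r}{\alpha}R_\alpha(p)\cdot(\text{weight})\ldots
\]
but convexity points the wrong way here. So for $\alpha>c_r$ I would instead use that $R_\alpha(p)/\alpha$ is nonincreasing in $\alpha$. This holds because $R_\alpha(p)-\alpha\cdot(\text{stuff})$ is convex and $R_0(p)\ge0$, so $R_\alpha/\alpha$ is nonincreasing. Hence
\[
 R_\alpha(p)\le\frac{\alpha}{c_r}R_{c_r}(p)\le\frac{\alpha}{c_r}B_r=\alpha(r-1)^{1-r},
\]
with equality forcing $p=b_r$. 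But at $b_r$, $R_\alpha=B_r<\alpha(r-1)^{1-r}$, so there is strict inequality on the interior and the maximizers are boundary points, namely $q_r$.

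I expect the main obstacle to be rigor at two places. The first is the continuity and extension of $R_\alpha$ to the boundary, including the claim that the largest root is the correct limit. The second is the strictness bookkeeping that identifies the maximizer sets exactly, especially at $\alpha=c_r$, where both $b_r$ and the boundary point $q_r$ attain $B_r$.
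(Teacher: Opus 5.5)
Your overall plan matches the paper's proof, except at one step. The shared parts are the exact identity $n^{1-r}\rho_{\alpha,r}(K_{n_1,\ldots,n_r})=R_\alpha(p_n)$, convexity in $\alpha$ from the variational formula, and Lemma~\ref{lem:multipartite-product} with root comparison at $\alpha=c_r$. They also include interpolation on $[0,c_r]$ for $\alpha<c_r$ and compactness for the distance statement. One structural difference is harmless. You define $R_\alpha$ by the root and extend it to the boundary of $\Delta_r$ by root continuity. The paper instead defines $f(\alpha,p)$ as the variational maximum, so continuity on the closed simplex is automatic, and derives the root equation on the interior by density. Your route also works. At a boundary point with one zero coordinate $p_i$, the top root $\alpha\widehat d_i>0$ is simple, and with two or more zero coordinates all roots tend to $0$.

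The genuine gap is the case $c_r<\alpha<1$. You assert that $R_\alpha(p)/\alpha$ is nonincreasing because $R_\alpha(p)$ is convex in $\alpha$ and $R_0(p)\geq0$, but that inference is false. Convexity pushes the other way: for convex $\phi$, the ratio $(\phi(\alpha)-\phi(0))/\alpha$ is nondecreasing. For example, $\phi(\alpha)=\alpha^2$ is convex with $\phi(0)=0$, yet $\phi(\alpha)/\alpha=\alpha$ increases.

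The monotonicity you need does hold here, but for a reason you did not give. Write $R_\alpha(p)=\max_x\{\alpha P_1(x)+(1-\alpha)P_0(x)\}$ as you do, with $P_0\geq0$ the clique (adjacency) part. Then $R_\alpha(p)/\alpha=\max_x\{P_1(x)+(\alpha^{-1}-1)P_0(x)\}$ is a maximum of functions that are nonincreasing in $\alpha$. With this fix, your chain $R_\alpha(p)\leq(\alpha/c_r)R_{c_r}(p)\leq\alpha(r-1)^{1-r}$ and the strictness check at $b_r$ are correct.

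Also, convexity does not actually point the wrong way; you interpolated on the wrong interval. Write $\alpha$ as a convex combination of $c_r$ and $1$, as the paper does. Using $B_r=c_r(r-1)^{1-r}$ and your own endpoint bound $R_1(p)\leq(r-1)^{1-r}$, this gives
\[
 R_\alpha(p)\leq\frac{1-\alpha}{1-c_r}B_r+\frac{\alpha-c_r}{1-c_r}(r-1)^{1-r}=\alpha(r-1)^{1-r}.
\]
For $\alpha>c_r$, equality forces $R_1(p)=(r-1)^{1-r}$, i.e.\ $p$ is a permutation of $q_r$. So no separate interior argument is needed.
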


\begin{proof}
Let $G=K_{n_1,\ldots,n_r}$, let $V_i$ be part $i$, and put $p_i=n_i/n$.
Let $x$ be the $\alpha$-Perron vector and write $a_i>0$ for its common
coordinate on $V_i$, as in the derivation of~\eqref{eq:multipartite-characteristic}.
Set $X_i=\sum_{v\in V_i}x_v^r=n_i a_i^r$.
Then $p\in\Delta_r^\uparrow$ and $X\in\Delta_r$.
Set
\[
 D(p,X)=\sum_{i=1}^rX_i\prod_{j\ne i}p_j,\qquad
 C(p,X)=r\left(\prod_{i=1}^rp_i\right)^{(r-1)/r}
          \left(\prod_{i=1}^rX_i\right)^{1/r}.
\]
Conversely, every $X\in\Delta_r$ is realized by
$x_v=(X_i/n_i)^{1/r}$ for $v\in V_i$, with $\sum_vx_v^r=1$. Thus
equations~\eqref{eq:hypergraph-variational} and~\eqref{eq:variational} give
\begin{equation}\label{eq:general-multipartite-variation}
\frac{\rho_{\alpha,r}(K_{n_1,\ldots,n_r})}{n^{r-1}}
 =
 \max_{\substack{X_i\geq0\\\sum_iX_i=1}}
 \left\{\alpha D(p,X)+(1-\alpha)C(p,X)\right\}.
\end{equation}
Here $p$ is fixed by the ordered part sizes, and $X$ ranges over the
full simplex $\Delta_r$.

For $p\in\Delta_r^\uparrow$ and $0\leq\alpha\leq1$, write
$\widehat d_i=\prod_{j\ne i}p_j$ and define
\[
 f(\alpha,p)=\max_{X\in\Delta_r}
 \{\alpha D(p,X)+(1-\alpha)C(p,X)\}.
\]
The functions $D$ and $C$ are continuous on the compact set
$\Delta_r^\uparrow\times\Delta_r$, so $f(\alpha,p)$ is continuous in $p$
for fixed $\alpha$.
For fixed $p$, $f(\alpha,p)$ is convex in $\alpha$: each
$\alpha D(p,X)+(1-\alpha)C(p,X)$ is affine in $\alpha$, and a pointwise
maximum of affine functions is convex.
We will bound $f(0,p)$, $f(c_r,p)$, and $f(1,p)$, then use this convexity
to treat the remaining values of $\alpha$.

Since $p_1\leq\cdots\leq p_r$, we have $\widehat d_1\geq\cdots\geq\widehat d_r$.
Suppose $0<\alpha<1$ and $p_1>0$.
For $p=(p_1,\ldots,p_r)$ with positive rational coordinates, choose
integer part sizes satisfying $n_i/n=p_i$.
Dividing~\eqref{eq:multipartite-characteristic} by $n^{r(r-1)}$ gives
\begin{equation}\label{eq:normalized-characteristic}
 \prod_{i=1}^r\bigl(f(\alpha,p)-\alpha\widehat d_i\bigr)
 =(1-\alpha)^r\left(\prod_{i=1}^r p_i\right)^{r-1}.
\end{equation}
By density of rational points and continuity of $f(\alpha,p)$ and the products
in $p$, \eqref{eq:normalized-characteristic} holds whenever all $p_i>0$.
Taking $X=(1,0,\ldots,0)$ gives $f(\alpha,p)\geq\alpha\widehat d_1$.
If $f(\alpha,p)=\alpha\widehat d_1$, the product on the left
would be zero, while the right-hand side is positive. Hence
$f(\alpha,p)>\alpha\widehat d_1$.

At $\alpha=c_r$, Lemma~\ref{lem:multipartite-product} gives
$B_r>c_r\widehat d_1$ and
\[
 \prod_{i=1}^r(B_r-c_r\widehat d_i)
 \geq(1-c_r)^r\left(\prod_{i=1}^r p_i\right)^{r-1},
\]
with equality if and only if $p_i=1/r$ for every $i$.
The product $\prod_i(t-c_r\widehat d_i)$ is strictly increasing for
$t>c_r\widehat d_1$. Comparing with~\eqref{eq:normalized-characteristic}
at $\alpha=c_r$ gives
$f(c_r,p)\leq B_r$, with equality only at $p=b_r$.

For every $p\in\Delta_r^\uparrow$, maximizing $C(p,X)$ and $D(p,X)$ over
$X\in\Delta_r$ gives
\[
 f(0,p)=\left(\prod_{i=1}^r p_i\right)^{(r-1)/r}\leq B_r,
 \qquad
 f(1,p)=\widehat d_1\leq(r-1)^{1-r}.
\]
The bound $f(0,p)\leq B_r$ follows from AM--GM, with equality only at
$p=b_r$. To bound $f(1,p)$, use
$\widehat d_1\leq((1-p_1)/(r-1))^{r-1}\leq(r-1)^{1-r}$.
Equality holds if and only if $p_1=0$ and
$p_2=\cdots=p_r=1/(r-1)$, that is, $p=q_r$.
If $p_1=0$, then $C(p,X)=0$ for every $X\in\Delta_r$, so
\[
 f(\alpha,p)=\alpha\widehat d_1.
\]
Since $B_r=c_r(r-1)^{1-r}$, we also have $f(c_r,p)\leq B_r$ for $p_1=0$,
with equality only at $p=q_r$.

For fixed $p$, convexity of $f(\alpha,p)$ in $\alpha$ gives
\[
 f(\alpha,p)\leq
 \begin{cases}
  (1-\alpha/c_r)f(0,p)+(\alpha/c_r)f(c_r,p),
     &0\leq\alpha\leq c_r,\\[2mm]
  \dfrac{1-\alpha}{1-c_r}f(c_r,p)
   +\dfrac{\alpha-c_r}{1-c_r}f(1,p),
     &c_r\leq\alpha\leq1.
 \end{cases}
\]
Substituting the bounds for $f(0,p)$, $f(c_r,p)$, and $f(1,p)$ yields
\[
 f(\alpha,p)
 \leq
 \begin{cases}
  B_r,&0\leq\alpha\leq c_r,\\[2mm]
  \alpha(r-1)^{1-r},&c_r\leq\alpha\leq1.
 \end{cases}
\]
For $0<\alpha<c_r$, equality $f(\alpha,p)=B_r$ requires $f(0,p)=B_r$,
so $p=b_r$. For $c_r<\alpha<1$, equality
$f(\alpha,p)=\alpha(r-1)^{1-r}$ requires $f(1,p)=(r-1)^{1-r}$,
so $p=q_r$. At $\alpha=c_r$, equality holds exactly when
$p\in\{b_r,q_r\}$.
At $p=b_r$, we have $D(b_r,X)=B_r$ and $C(b_r,X)\leq B_r$
for every $X\in\Delta_r$ by AM--GM, with equality in the latter when $X=b_r$.
Hence $f(\alpha,b_r)=B_r$.
At $p=q_r$, the identity $f(\alpha,p)=\alpha\widehat d_1$
gives $f(\alpha,p)=\alpha(r-1)^{1-r}$.

Thus, we have proved
\[
 f(\alpha,p)=M_\alpha\iff p\in\mathcal E_\alpha.
\]

The $r$-clique hypergraph $\mathcal H_r(K_{n_1,\ldots,n_r})$ has average degree
$r\prod_i n_i/n$. For the part sizes $n_1,\ldots,n_r$ of $T_r(n)$,
$p_i\to1/r$ as $n\to\infty$, so~\eqref{eq:degree-bounds} gives
\[
 \frac{\rho_{\alpha,r}(T_r(n))}{n^{r-1}}
 \geq r\prod_i p_i\longrightarrow B_r.
\]
For $K_1\vee T_{r-1}(n-1)$, we have $n_1=1$ and
$p_i\to1/(r-1)$ for $i\geq2$. The vertex in part $1$ has $r$-clique
degree $\prod_{j=2}^r n_j$, so~\eqref{eq:degree-bounds} gives
\[
 \frac{\rho_{\alpha,r}(K_1\vee T_{r-1}(n-1))}{n^{r-1}}
 >\alpha\prod_{j=2}^r p_j\longrightarrow\alpha(r-1)^{1-r}.
\]
Using $T_r(n)$ and $K_1\vee T_{r-1}(n-1)$ in~\eqref{eq:multipartite-asymptotic}
gives a limit inferior of at least $M_\alpha$.
On the other hand,~\eqref{eq:general-multipartite-variation} gives
$n^{1-r}\rho_{\alpha,r}(K_{n_1,\ldots,n_r})=f(\alpha,p)\leq M_\alpha$
for every choice of the part sizes. This proves~\eqref{eq:multipartite-asymptotic}.

It remains to prove $\operatorname{dist}(p_n,\mathcal E_\alpha)\to0$.
For every $\varepsilon>0$, the points in $\Delta_r^\uparrow$ at distance at least $\varepsilon$
from $\mathcal E_\alpha$ form a compact set on which $f(\alpha,p)<M_\alpha$.
If this set is nonempty, $M_\alpha-f(\alpha,p)$ has a positive minimum on it;
if it is empty, any positive gap works. Hence there exists
$\gamma=\gamma(r,\alpha,\varepsilon)>0$ such that, for $p\in\Delta_r^\uparrow$,
\begin{equation}\label{eq:phase-uniform-gap}
 \operatorname{dist}(p,\mathcal E_\alpha)\geq\varepsilon
 \quad\Longrightarrow\quad
 f(\alpha,p)\leq M_\alpha-\gamma.
\end{equation}
For an asymptotically extremal sequence,
$n^{1-r}\rho_{\alpha,r}(G_n)\to M_\alpha$. By~\eqref{eq:phase-uniform-gap},
$\operatorname{dist}(p_n,\mathcal E_\alpha)\to0$.
\end{proof}

\begin{proof}[Proof of Theorem~\ref{thm:global-phase}]
Theorem~\ref{thm:multipartite-reduction} reduces the maximum to complete
$r$-partite graphs. Proposition~\ref{prop:multipartite-phase} therefore gives
\[
 \lim_{n\to\infty}n^{1-r}
 \max_{\substack{|V(G)|=n\\G\text{ is }K_{r+1}\text{-free}}}
 \rho_{\alpha,r}(G)
 =M_\alpha.\qedhere
\]
\end{proof}

\begin{proof}[Proof of Corollary~\ref{cor:balanced-multipartite}]
Theorem~\ref{thm:multipartite-reduction} reduces the maximization to
complete $r$-partite graphs. For $p_i=n_i/n>0$,
the bound and equality cases established in the proof of
Proposition~\ref{prop:multipartite-phase}
give $n^{1-r}\rho_{\alpha,r}(K_{n_1,\ldots,n_r})\leq B_r$, with equality
only when $p_1=\cdots=p_r=1/r$.
Since $r\mid n$, every part has size $n/r$.

Conversely,
$\mathcal H_r(T_r(n))$ is $(n/r)^{r-1}$-regular: an $r$-clique containing
a fixed vertex is obtained by choosing one vertex from each other part.
Equation~\eqref{eq:degree-bounds} therefore gives
$\rho_{\alpha,r}(T_r(n))=(n/r)^{r-1}$, so $T_r(n)$ attains the bound.
\end{proof}

\subsection{Exact optimization of the part sizes}

For fixed $r$ and $\alpha$, we use Proposition~\ref{prop:multipartite-phase}
to study maximizing integer partitions $(n_1,\ldots,n_r)$ with
$p=(n_1/n,\ldots,n_r/n)$ near $\mathcal E_\alpha$.
The first coordinate of $q_r$ is zero, but $p_1>0$ for every graph.
At $\alpha=c_r$, we have $f(c_r,b_r)=f(c_r,q_r)=B_r$, so we compare
lower-order terms.

We next estimate
$\rho_{\alpha,r}(K_{m,n_2,\ldots,n_r})-\alpha\prod_{i=2}^r n_i$
for fixed $m$. The case $m=1$ will be used in
Lemma~\ref{lem:eventual-part-sizes} to prove $n_r-n_2\leq1$ when $\alpha>c_r$.
For the proof of Lemma~\ref{lem:uniform-correction}, we first record
the following identity, valid for real $a,x_2,\ldots,x_r$:
\begin{equation}\label{eq:product-difference}
\begin{aligned}
 &\sum_{i=2}^ra^{i-2}(x_i-a)\prod_{j=i+1}^r x_j\\
 &\quad=\sum_{i=2}^ra^{i-2}\prod_{j=i}^r x_j
       -\sum_{i=2}^ra^{i-1}\prod_{j=i+1}^r x_j\\
 &\quad=\prod_{j=2}^r x_j
       +\sum_{i=3}^ra^{i-2}\prod_{j=i}^r x_j
       -\sum_{i=3}^ra^{i-2}\prod_{j=i}^r x_j
       -a^{r-1}\\
 &\quad=\prod_{j=2}^r x_j-a^{r-1}.
\end{aligned}
\end{equation}

\begin{lemma}\label{lem:uniform-correction}
Fix $r\geq3$, $0<\alpha<1$, and a positive integer $m$. Put
$\beta=1-\alpha$. Define $C_1=C_1(r,\alpha,m)$ and $C_2=C_2(r,\alpha,m)$ by
\[
 C_1=\beta^r(2/\alpha)^{r-1}m^{r-1},
 \qquad
 C_2
 =\frac{C_1(r-1)(C_1+\alpha m)
              (1+\alpha+C_1)^{r-1}}{\alpha^{r-1}}.
\]
For integers $m\leq n_2\leq\cdots\leq n_r$, let $P=\prod_{i=2}^r n_i$
and $L=n_2$. If $L\geq2m$, then
$\delta=\rho_{\alpha,r}(K_{m,n_2,\ldots,n_r})-\alpha P$ satisfies
\begin{equation}\label{eq:uniform-correction-rate}
 0<\delta\leq C_1,
 \qquad
 \left|\delta-\frac{\beta^r m^{r-1}}{\alpha^{r-1}}\right|
 \leq\frac{C_2}{L}.
\end{equation}
\end{lemma}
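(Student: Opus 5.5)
The approach is to work directly with the characteristic equation~\eqref{eq:multipartite-characteristic} for $G=K_{m,n_2,\ldots,n_r}$. Here $d_1=P$ and $d_i=mP/n_i$ for $i\geq2$. Write $\rho=\alpha P+\delta$. The equation becomes
\[
 \delta\prod_{i=2}^r\Bigl(\alpha P+\delta-\frac{\alpha mP}{n_i}\Bigr)
 =\beta^r m^{r-1}P^{r-1}.
\]
Dividing the $i$th factor by $P$ gives
\[
 \delta\prod_{i=2}^r x_i=\beta^r m^{r-1},
 \qquad x_i=\alpha\Bigl(1-\frac{m}{n_i}\Bigr)+\frac{\delta}{P}.
\]
Positivity $\delta>0$ is immediate from~\eqref{eq:degree-bounds}, since $\rho>\alpha d_1=\alpha P$.

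\textbf{Upper bound $\delta\leq C_1$.} Since $n_i\geq L\geq2m$, each $x_i\geq\alpha(1-m/n_i)\geq\alpha/2$. Hence
\[
 \delta\leq\beta^r m^{r-1}(2/\alpha)^{r-1}=C_1.
\]
Alternatively, one can note that the left side is increasing in $\delta$, so any $t$ with product at least the right side bounds $\delta$ above. This is the contrapositive form of Lemma~\ref{lem:root-comparison}.

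\textbf{Asymptotic estimate.} Set $a=\alpha$ and use~\eqref{eq:product-difference}:
\[
 \prod_{i=2}^r x_i-\alpha^{r-1}=\sum_{i=2}^r\alpha^{i-2}(x_i-\alpha)\prod_{j>i}x_j.
\]
We have $|x_i-\alpha|\leq\alpha m/n_i+\delta/P$. Since $P\geq L^{r-1}\geq L$ and $n_i\geq L$, this gives $|x_i-\alpha|\leq(\alpha m+C_1)/L$. Each $x_j\leq\alpha+C_1$, because $\delta/P\leq C_1$. Therefore
\[
 \Bigl|\prod x_i-\alpha^{r-1}\Bigr|\leq(r-1)\frac{\alpha m+C_1}{L}(1+\alpha+C_1)^{r-2},
\]
using $\alpha^{i-2}\leq1$ and crude bounding of powers. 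Then
\[
 \delta-\frac{\beta^r m^{r-1}}{\alpha^{r-1}}=\frac{\delta\bigl(\alpha^{r-1}-\prod x_i\bigr)}{\alpha^{r-1}},
\]
because $\delta\prod x_i=\beta^r m^{r-1}$. Its absolute value is at most
\[
 \frac{C_1(r-1)(C_1+\alpha m)(1+\alpha+C_1)^{r-2}}{\alpha^{r-1}L}\leq\frac{C_2}{L}.
\]
The extra factor $(1+\alpha+C_1)$ in $C_2$ absorbs the slack.

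\textbf{Main obstacle.} The main obstacle is the bookkeeping of uniform constants. The $\delta/P$ term has to be controlled by $1/L$, and the telescoping identity has to be applied with bounded partial products, so that $C_2$ depends only on $r$, $\alpha$ and $m$ and not on the $n_i$.
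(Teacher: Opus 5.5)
Your proof is correct and follows the paper's argument essentially step for step: the normalized characteristic equation $\delta\prod_{i\ge2}x_i=\beta^r m^{r-1}$, the factor bound $x_i\ge\alpha/2$ giving $\delta\le C_1$, and the telescoping identity~\eqref{eq:product-difference} with $|x_i-\alpha|\le(C_1+\alpha m)/L$. Your partial-product bound uses exponent $r-2$ instead of the paper's $(1+\alpha+C_1)^{r-1}$; it is slightly sharper and is absorbed into $C_2$, as you note. Your aside calls the monotonicity remark the ``contrapositive'' of Lemma~\ref{lem:root-comparison}, but it is really the converse; the claim itself is true and is not used.
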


\begin{proof}
Equation~\eqref{eq:multipartite-characteristic} gives $\delta>0$ and
\[
 \delta\prod_{i=2}^r
 \left(\alpha+\frac{\delta}{P}-\frac{\alpha m}{n_i}\right)
 =\beta^r m^{r-1}.
\]
Since $n_i\geq L\geq2m$ for $i\geq2$, every factor is at least
$\alpha/2$, so $\delta\leq C_1$.
Put $M=1+\alpha+C_1$ and
$v_i=\alpha+\delta/P-\alpha m/n_i$. Since $P\geq L^{r-1}\geq L$, we have
\[
 0<v_i\leq M,
 \qquad |v_i-\alpha|\leq\frac{C_1+\alpha m}{L}.
\]
Applying~\eqref{eq:product-difference} with $a=\alpha$ and $x_i=v_i$,
and using $M\geq1$, we obtain
\[
 \left|\prod_{i=2}^r v_i-\alpha^{r-1}\right|
 \leq\frac{(r-1)(C_1+\alpha m)M^{r-1}}{L}.
\]
Finally,
\[
 \left|\delta-\frac{\beta^r m^{r-1}}{\alpha^{r-1}}\right|
 =\frac{\delta}{\alpha^{r-1}}
   \left|\alpha^{r-1}-\prod_{i=2}^r v_i\right|
 \leq\frac{C_2}{L}.
\]
This proves~\eqref{eq:uniform-correction-rate}.
\end{proof}

\begin{samepage}
\begin{lemma}\label{lem:eventual-part-sizes}
Let $r\geq3$ and $0<\alpha<1$, and put $c_r=((r-1)/r)^{r-1}$.
There exists an integer $n_0=n_0(r,\alpha)\geq r$ such that, for $n\geq n_0$
and integers $1\leq n_1\leq\cdots\leq n_r$ with $n_1+\cdots+n_r=n$,
the following hold.
\begin{enumerate}[label=\textup{(\roman*)}]
\item If $0<\alpha\leq c_r$, then
\[
 \rho_{\alpha,r}(K_{n_1,\ldots,n_r})
 \leq\rho_{\alpha,r}(T_r(n)),
\]
with equality if and only if $K_{n_1,\ldots,n_r}\cong T_r(n)$.
\item If $c_r<\alpha<1$, then
\[
 \rho_{\alpha,r}(K_{n_1,\ldots,n_r})
 \leq\rho_{\alpha,r}\bigl(K_1\vee T_{r-1}(n-1)\bigr),
\]
with equality if and only if
$K_{n_1,\ldots,n_r}\cong K_1\vee T_{r-1}(n-1)$.
\end{enumerate}
\end{lemma}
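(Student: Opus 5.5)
We argue by a maximizing partition. Fix $r,\alpha$ and let $(n_1,\ldots,n_r)$ maximize $\rho_{\alpha,r}(K_{n_1,\ldots,n_r})$ over ordered partitions of $n$. The sequence of maximizers is asymptotically extremal, because Proposition~\ref{prop:multipartite-phase} shows the maximum is $M_\alpha n^{1-r}$-asymptotic. Hence $\operatorname{dist}(p,\mathcal E_\alpha)\to0$. For a fixed small $\varepsilon>0$ and $n$ large, the maximizer therefore lies within $\varepsilon$ of $b_r$ or of $q_r$, the latter only when $\alpha\geq c_r$. We treat the two neighbourhoods separately. The aim is to show that inside each neighbourhood the only candidates are $T_r(n)$ and $K_1\vee T_{r-1}(n-1)$, respectively, and then to compare these two when $\alpha=c_r$.

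\emph{Neighbourhood of $q_r$.} Here $p_1\leq\varepsilon$. We expand the normalized characteristic equation~\eqref{eq:normalized-characteristic}. For $i\geq2$ each factor $f-\alpha\widehat d_i$ is bounded below by a positive constant, roughly $\alpha(r-1)^{1-r}$. Consequently
\[
 f(\alpha,p)\leq\alpha\widehat d_1+Kp_1^{r-1}
\]
with $K=K(r,\alpha)$.

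Combining this with
\[
 \widehat d_1\leq\bigl((1-p_1)/(r-1)\bigr)^{r-1}\leq(r-1)^{1-r}\bigl(1-(r-1)p_1+O(p_1^2)\bigr)
\]
and $r\geq3$ gives $f\leq\alpha(r-1)^{1-r}-c\,p_1+O(p_1^2)$.

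When $\alpha=c_r$, the right side is $B_r-c\,p_1+O(p_1^2)$. Since $p_1\geq1/n$, we get $\rho\leq B_rn^{r-1}-c'n^{r-2}$. On the other hand, the average-degree bound~\eqref{eq:degree-bounds} applied to $T_r(n)$ gives $\rho_{\alpha,r}(T_r(n))\geq rN/n\geq(n/r)^{r-1}-O(n^{r-3})$. So the $q_r$ region loses at $\alpha=c_r$.

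When $\alpha>c_r$, the same inequality compares $n_1=m$ with $n_1=1$. Raising $n_1$ from $1$ to $m$ lowers $\alpha\prod_{i\geq2}n_i$ by order $(m-1)n^{r-2}$. The term $Kp_1^{r-1}n^{r-1}=Km^{r-1}$ gains at most $K\varepsilon^{r-2}mn^{r-2}$, which is smaller for small $\varepsilon$. Hence $n_1=1$. With $n_1=1$ we apply Lemma~\ref{lem:uniform-correction} with $m=1$, which gives $\rho=\alpha P+\beta^r/\alpha^{r-1}+O(1/n)$. If $n_r-n_2\geq2$, moving one vertex from part $r$ to part $2$ raises the integer $P$ by at least $P/(n_2n_r)\geq1$. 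That gain beats the $O(1/n)$ fluctuation of $\delta$. Thus $n_2,\ldots,n_r$ are balanced and $G\cong K_1\vee T_{r-1}(n-1)$.

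\emph{Neighbourhood of $b_r$ (relevant when $\alpha\leq c_r$).} Here all $p_i>0$, and $f(\alpha,\cdot)$ is defined implicitly and smoothly by~\eqref{eq:normalized-characteristic}, via the implicit function theorem in the root variable $t>\alpha\widehat d_1$. The plan is to show that $f(\alpha,\cdot)$ is strictly Schur-concave on a small symmetric neighbourhood of $b_r$, using Lemma~\ref{lem:schur-ostrowski} applied to $-f$.

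Granting that, suppose $n_r-n_1\geq2$. Moving one vertex from the largest part to the smallest produces a vector strictly majorized by $p$, so it strictly increases $\rho$. This contradicts maximality, so every maximizer is $T_r(n)$. Uniqueness follows from the strictness.

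To verify the Schur–Ostrowski condition, we differentiate the log of the characteristic equation implicitly. This gives $\partial_if-\partial_jf$ as $(p_j-p_i)$ times an expression that is continuous in $p$. At $p=b_r$ that expression equals a second-order coefficient. Its positivity should reduce to exactly the inequality behind Lemma~\ref{lem:multipartite-product}, namely $z_i+z_j\leq1$ with $z_i<1$, evaluated with $\alpha$ in place of $c_r$. It is strict for $\alpha\leq c_r$, and continuity then extends it to a neighbourhood.

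\emph{Main obstacle.} This local Schur-concavity computation near $b_r$ is the step I expect to be hardest, especially keeping it strict at the boundary value $\alpha=c_r$. If the implicit derivative becomes unwieldy, the first fallback is a direct one-step comparison. Evaluate the characteristic polynomial of the transferred partition at $t=\rho_{\alpha,r}(K_{n_1,\ldots,n_r})$, show that it is negative, and conclude by Lemma~\ref{lem:root-comparison}. The sign is expanded to second order in $1/n$ around the balanced point.

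Finally, take $n_0$ large enough that all the $\varepsilon$-neighbourhood, $O(1/n)$, and $O(p_1^2)$ estimates above hold. This yields (i) and (ii).
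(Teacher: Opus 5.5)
Your proposal is correct. Near $q_r$ it follows the paper's argument:
\begin{itemize}
\item the bound $0<\delta\le C_{r,\alpha}m^{r-1}$ from the characteristic equation;
\item moving $m-1$ vertices out of part~$1$ and using $\rho>\alpha\Delta$ for the new graph;
\item Lemma~\ref{lem:uniform-correction} with $m=1$ to balance parts $2,\ldots,r$;
\item at $\alpha=c_r$, comparing $B_r-\Theta(1/n)$ against $B_r-O(n^{-2})$.
\end{itemize}

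Near $b_r$ your main route differs. The paper holds $\widehat\rho$ fixed and views the characteristic equation as a function $\Phi$ of $t=p_1p_r$. It then checks $\Phi_t<0$ at the balanced point and applies Lemma~\ref{lem:root-comparison}, which is exactly your fallback. You instead differentiate the root itself, and the computation you left as ``should reduce'' does go through.

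For free $p\in\mathbb R_{>0}^r$, let $\theta(p)$ be the root of $\prod_k(\theta-\alpha\widehat d_k)=\beta^r(\prod_kp_k)^{r-1}$ with $\theta>\alpha\max_k\widehat d_k$. Put $z_k=\alpha\widehat d_k/\theta$ and $W=\sum_k(1-z_k)^{-1}$. Differentiate the logarithm of the equation implicitly, using $p_kz_k=\alpha\prod_\ell p_\ell/\theta$ for every $k$. This gives
\[
 (p_i-p_j)\Bigl(\frac{\partial\theta}{\partial p_i}-\frac{\partial\theta}{\partial p_j}\Bigr)
 =-\frac{\theta\,(p_i-p_j)^2}{W\,p_ip_j}
 \Bigl(\sum_{k\ne i,j}\frac{1}{1-z_k}+\frac{1-z_i-z_j}{(1-z_i)(1-z_j)}\Bigr),
\]
whose bracket is exactly the one in the proof of Lemma~\ref{lem:multipartite-product}. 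At $b_r$ every $z_k=\alpha$, so the bracket equals $(r-1-r\alpha)/(1-\alpha)^2$. This is positive because $\alpha\le c_r<(r-1)/r$ for $r\ge3$. So the boundary case $\alpha=c_r$, which you flagged as the main obstacle, is harmless. The relevant sign $r\alpha-(r-1)$ is the same one that appears in the paper's $\Phi_t$.

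Lemma~\ref{lem:schur-ostrowski} requires a domain with nonempty interior, so apply it to $\theta$ on a small symmetric cube around $b_r$ in $\mathbb R^r_{>0}$. Alternatively, integrate $\partial\theta/\partial p_1-\partial\theta/\partial p_r$ along $p+s(e_1-e_r)$, $0\le s\le 1/n$, where $e_k$ is the $k$th unit vector. On this segment $p_1+s<p_r-s$, except possibly at $s=1/n$.

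The paper's version needs only one partial derivative at one point and no implicit function theorem. Yours gives local strict Schur-concavity, so every one-vertex transfer near $b_r$ from a part to a part smaller by at least two increases $\rho$. It also explains why the same bracket governs Lemma~\ref{lem:multipartite-product}.

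A minor slip: the maximum grows like $M_\alpha n^{r-1}$, not $M_\alpha n^{1-r}$.
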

\end{samepage}

\begin{proof}
Fix $r$ and $\alpha$, and put $B_r=r^{1-r}$ and $\beta=1-\alpha$.
Let $n\geq r$ be an integer, and choose positive integers
$n_1\leq\cdots\leq n_r$ with $n_1+\cdots+n_r=n$ such that
$G=K_{n_1,\ldots,n_r}$ has maximum $\alpha$-$r$-clique spectral radius
among all $n$-vertex complete $r$-partite graphs.
Write $p=(p_1,\ldots,p_r)$, where $p_i=n_i/n$, and put
$\widehat d_i=\prod_{j\ne i}p_j$, $\rho=\rho_{\alpha,r}(G)$, and
$\widehat\rho=\rho/n^{r-1}$.

To determine the part sizes of $G$ for all sufficiently large $n$, we use
Proposition~\ref{prop:multipartite-phase}: for every fixed open neighborhood
$U$ of $\mathcal E_\alpha$ in $\Delta_r^\uparrow$, there is an integer
$N=N(r,\alpha,U)$ such that every such graph $G$ has $p\in U$ whenever
$n\geq N$.
We examine $p$ near $b_r$ and $q_r$ separately and choose the neighborhoods
in terms of $r$ and $\alpha$ before fixing a common lower bound on $n$.

For $0<\alpha\leq c_r$, consider such graphs $G$ with $p$ near $b_r$.

\nopagebreak[4]
\noindent\begin{minipage}{\linewidth}
\setlength{\parindent}{1.5em}
To compare parts $r$ and $1$, set $a=p_r=n_r/n$, $b=p_1=n_1/n$,
and $\xi=(\xi_1,\ldots,\xi_{r-2})=(p_2,\ldots,p_{r-1})$.
Put $S=a+b$, $R=\prod_{\ell=1}^{r-2}\xi_\ell$,
and $t=ab$.
\end{minipage}

A vertex move from part $r$ to part $1$ preserves $S$ and $\xi$.
To compare the spectral radii by Lemma~\ref{lem:root-comparison},
we keep $\rho$ fixed. Dividing~\eqref{eq:multipartite-characteristic} by
$n^{r(r-1)}$ and expanding the product of the factors for parts $1$ and $r$
gives $\Phi(\widehat\rho,S,\xi,t)=0$, where
\[
 \Phi(\widehat\rho,S,\xi,t)
 =(\widehat\rho^2-\alpha\widehat\rho SR+\alpha^2tR^2)
   \prod_{\ell=1}^{r-2}
   \left(\widehat\rho-\frac{\alpha tR}{\xi_\ell}\right)
 -\beta^r(tR)^{r-1}.
\]
Suppose $n_r\geq n_1+2$. Move one vertex from part $r$ to part $1$.
Before reordering, the two changed fractions are $a'=a-h$ and
$b'=b+h$, where $h=1/n$, so
\[
 t'=a'b'=t+h(a-b-h)>t.
\]
Write $n'_1\leq\cdots\leq n'_r$ for the new ordered part sizes and put
$\rho'=\rho_{\alpha,r}(K_{n'_1,\ldots,n'_r})$.

Write $\Phi_t=\partial\Phi/\partial t$. At $p=b_r$, put $u=1/r$.
Direct differentiation gives
\[
 \Phi_t\bigl(u^{r-1},2u,(u,\ldots,u),u^2\bigr)
 =\beta^{r-2}u^{(r-2)(r+1)}\bigl(r\alpha-(r-1)\bigr)<0,
\]
since $\alpha\leq c_r<(r-1)/r$. At $p=b_r$,
$\widehat\rho-\alpha\max_i\widehat d_i=(1-\alpha)B_r>0$.

By continuity, $\Phi_t<0$ on a fixed open box around
$(\widehat\rho,S,\xi,t)=(u^{r-1},2u,(u,\ldots,u),u^2)$
and $\widehat\rho>\alpha\max_i\widehat d_i$ whenever
$(\widehat\rho,p)$ is sufficiently close to $(B_r,b_r)$.

Since $\widehat\rho\to B_r$ for these graphs and
$|a'-a|=|b'-b|=1/n$, there are a fixed neighborhood
$V$ of $b_r$ in $\Delta_r^\uparrow$ and an integer $N_1=N_1(r,\alpha)$
such that, for every
such graph $G$ with $n\geq N_1$ and $p\in V$, both
$(\widehat\rho,S,\xi,t)$ and $(\widehat\rho,S,\xi,t')$ lie in this box.
The inequality $\widehat\rho>\alpha\max_i\widehat d_i$ holds for the
proportions before and after the move, with $\widehat\rho$ held at
its value before the move. The segment between $t$ and $t'$
also lies in the box, since $\widehat\rho,S,\xi$ stay fixed.

Consequently, $\Phi(\widehat\rho,S,\xi,t')<\Phi(\widehat\rho,S,\xi,t)=0$.
The product in~\eqref{eq:multipartite-characteristic} is invariant
under reordering. The preceding positivity condition gives
$\rho>\alpha\prod_{i=2}^r n'_i$.
Lemma~\ref{lem:root-comparison}, applied at $n^{r-1}\widehat\rho=\rho$,
gives $\rho'>\rho$, contradicting maximality.
Thus, for $n\geq N_1(r,\alpha)$, every such graph $G$ with
$p\in V$ satisfies $n_r-n_1\leq1$.

For $c_r\leq\alpha<1$, consider such graphs $G$ with $p$ near $q_r$.
Put $c_0=1/(2(r-1))$ and $C_{r,\alpha}=\beta^r(2/\alpha)^{r-1}$, and take
$0<\eta\leq1/(4(r-1))$.
Take the neighborhood $V_q(\eta)$ of $q_r$ defined by
\[
 V_q(\eta)=\{p\in\Delta_r^\uparrow:p_1<\eta,\
 |p_i-1/(r-1)|<\eta\ (2\leq i\leq r)\}.
\]
Write $m=n_1$, $P=\prod_{i=2}^r n_i$, and
$\rho=\alpha P+\delta$. For $p\in V_q(\eta)$,
$m\leq\eta n$, $n_i\geq c_0 n$, and $m/n_i\leq1/2$.
Equation~\eqref{eq:multipartite-characteristic} gives
\[
 \delta\prod_{i=2}^r
 \left(\alpha+\frac{\delta}{P}-\frac{\alpha m}{n_i}\right)
 =\beta^r m^{r-1}.
\]
Here $\delta>0$, and $m/n_i\leq1/2$ makes every factor in the product
at least $\alpha/2$. Therefore
\[
 0<\delta\leq C_{r,\alpha}m^{r-1}.
\]
The bound also holds for $m=m(n)$, since $C_{r,\alpha}$ depends only
on $r$ and~$\alpha$.

First suppose $\alpha>c_r$.
If $m\geq2$, move $m-1$ vertices from part $1$ to part $r$.
The resulting graph has ordered part sizes
$(1,n_2,\ldots,n_{r-1},n_r+m-1)$.
The product of the last $r-1$ part sizes is
\[
 \widetilde P=P+(m-1)\prod_{i=2}^{r-1} n_i.
\]
Set $\widetilde\rho=\rho_{\alpha,r}(K_{1,n_2,\ldots,n_{r-1},n_r+m-1})$.
By~\eqref{eq:degree-bounds}, $\widetilde\rho>\alpha\widetilde P$.
\begin{samepage}
Since $m\leq2(m-1)$ and $m/n\leq\eta$, we obtain
\[
 \widetilde\rho-\rho
 >\alpha(m-1)\prod_{i=2}^{r-1} n_i-C_{r,\alpha}m^{r-1}
 \geq\bigl(\alpha c_0^{r-2}-2C_{r,\alpha}\eta^{r-2}\bigr)
       (m-1)n^{r-2}.
\]
We now fix $\eta$ small enough that
$2C_{r,\alpha}\eta^{r-2}\leq\alpha c_0^{r-2}/2$.
Then $\widetilde\rho>\rho$, contradicting maximality.
Thus every such graph $G$ with $p\in V_q(\eta)$ has $n_1=m=1$.
\end{samepage}

It remains to prove $n_r-n_2\leq1$.
Put $C_2=C_2(r,\alpha,1)$ from
Lemma~\ref{lem:uniform-correction} and $L=n_2\geq c_0 n$.
Equation~\eqref{eq:uniform-correction-rate} gives
\[
 \left|\delta-\frac{\beta^r}{\alpha^{r-1}}\right|\leq\frac{C_2}{L}.
\]
If $n_r\geq n_2+2$, move one vertex from part $r$ to part $2$.
After reordering, write $1=n'_1\leq n'_2\leq\cdots\leq n'_r$,
$P'=\prod_{i=2}^r n'_i$, and $L'=n'_2$. Every one of the last $r-1$
parts still has size at least $n_2$, so $L'\geq L\geq c_0 n$.
Equation~\eqref{eq:uniform-correction-rate} applies both before and
after the move, and
\[
 P'-P=(n_r-n_2-1)\prod_{i=3}^{r-1}n_i
       \geq(c_0 n)^{r-3}.
\]
Set $\rho'=\rho_{\alpha,r}(K_{1,n'_2,\ldots,n'_r})$ and
$\delta'=\rho'-\alpha P'$. Then
$|\delta'-\delta|\leq C_2/L+C_2/L'\leq2C_2/(c_0 n)$. Hence
\[
 \rho'-\rho\geq\alpha(c_0 n)^{r-3}-\frac{2C_2}{c_0 n}>0
\]
for all sufficiently large $n$. This also holds for $r=3$, when
$\prod_{i=3}^{r-1}n_i$ is empty and equals $1$. Therefore
$n_r-n_2\leq1$.

At $\alpha=c_r$, we will show $\widehat\rho\leq B_r-B_r/(2n)$ near $q_r$,
while $n^{1-r}\rho_{c_r,r}(T_r(n))=B_r+O_r(n^{-2})$.
Take $0<\eta\leq1/(4(r-1))$ with $C_{r,c_r}\eta^{r-2}\leq B_r/2$
and use $V_q(\eta)$. Write $\mu=m/n$. By AM--GM and
$0<\delta\leq C_{r,c_r}m^{r-1}$, every choice of the part sizes with
$p\in V_q(\eta)$ satisfies
\[
 \widehat\rho\leq B_r(1-\mu)^{r-1}+C_{r,c_r}\mu^{r-1}.
\]
Using $0<\mu\leq\eta$, $(1-\mu)^{r-1}\leq1-\mu$, and $m\geq1$, we obtain
\[
 \widehat\rho\leq B_r-\frac{B_r}{2}\mu
 \leq B_r-\frac{B_r}{2n}.
\]
Let $n_1^*\leq\cdots\leq n_r^*$ be the part sizes of $T_r(n)$, and put
$p_i^*=n_i^*/n=1/r+e_i$. Then
$|e_i|\leq1/n$ and $\sum_i e_i=0$.
The terms linear in $e_i$ in the expansion of $r\prod_i(1/r+e_i)$
cancel because $\sum_i e_i=0$. Hence
$r\prod_i p_i^*=B_r+O_r(n^{-2})$. The average-degree bound in
\eqref{eq:degree-bounds}, together with the upper bound established
in the proof of Proposition~\ref{prop:multipartite-phase}, gives
\[
 \frac{\rho_{c_r,r}(T_r(n))}{n^{r-1}}=B_r+O_r(n^{-2}).
\]
For all sufficiently large $n$,
$n^{1-r}\rho_{c_r,r}(T_r(n))>B_r-B_r/(2n)$.
Thus no such graph $G$ has $p\in V_q(\eta)$ at $\alpha=c_r$.

Finally, let $U$ be the union of the neighborhoods chosen above for the
points of $\mathcal E_\alpha$.
Since these neighborhoods depend only on $r$ and $\alpha$, we can choose
an integer $n_0=n_0(r,\alpha)\geq r$ so that $n_0\geq N(r,\alpha,U)$ and
all the preceding comparisons hold whenever $n\geq n_0$.
Then every such graph $G$ has $p\in U$, and the local conclusions give
$n_r-n_1\leq1$ if $\alpha\leq c_r$, or
$n_1=1$ and $n_r-n_2\leq1$ if $\alpha>c_r$.
Hence (i) and (ii), including their equality cases, follow.
\end{proof}

\begin{proof}[Proof of Theorem~\ref{thm:alpha-clique-turan}]
By Theorem~\ref{thm:multipartite-reduction}, all extremal graphs are
complete $r$-partite. For all sufficiently large $n$,
Lemma~\ref{lem:eventual-part-sizes} determines the
part sizes and shows that the extremal graph is unique.
\end{proof}

\clearpage
\section{Open problems}\label{sec:conclusion}

Theorem~\ref{thm:alpha-clique-turan} determines the extremal graphs for
$s=r$ and all sufficiently large $n$.
For $3\leq s<r$, Theorem~\ref{thm:multipartite-reduction} still shows that
every extremal graph is complete $r$-partite, but the maximizing part sizes
remain undetermined. We therefore ask whether $T_r(n)$ and
\mbox{$K_1\vee T_{r-1}(n-1)$} are again the only extremal graphs, and how the
answer depends on $\alpha$.
\begin{samepage}
\begin{problem}\label{prob:part-size}
Let $3\leq s<r$ be fixed integers. For $\alpha\in(0,1)$, determine
\[
 \lim_{n\to\infty}n^{1-s}
 \max_{\substack{n_1+\cdots+n_r=n\\1\leq n_1\leq\cdots\leq n_r}}
 \rho_{\alpha,s}(K_{n_1,\ldots,n_r}),
\]
and the maximizing integer partitions for all sufficiently large $n$.
In particular, for each fixed $\alpha\in(0,1)$ and all sufficiently large
$n$, is every extremal graph either $T_r(n)$ or
$K_1\vee T_{r-1}(n-1)$?
For every sufficiently large $n$, is there a unique
$\alpha_n\in(0,1)$ at which these two graphs have the same $\alpha$-$s$-clique spectral radius,
and does $\alpha_n\to((r-1)/r)^{s-1}$?
\end{problem}
\end{samepage}

For $s=r$, Theorem~\ref{thm:alpha-clique-turan} guarantees the existence
of $n_0(r,\alpha)$ but does not determine its least possible value.
This leads to a second problem concerning the dependence of
$n_0(r,\alpha)$ on $r$ and $\alpha$, particularly near $\alpha=c_r$.

\begin{problem}\label{prob:threshold}
For fixed $r\geq3$ and $0<\alpha<1$, determine the least integer
$n_0(r,\alpha)\geq r$ such that the classification in
Theorem~\ref{thm:alpha-clique-turan} holds for every $n\geq n_0(r,\alpha)$.
How does $n_0(r,\alpha)$ behave as $\alpha$ approaches $c_r$?
\end{problem}

\section*{Funding}
This work is partially supported by the National Natural Science
Foundation of China (No.~12271182).

\section*{Declaration of competing interest}
The authors declare that they have no known competing financial interests
or personal relationships that could have appeared to influence the work
reported in this paper.

\section*{Declaration of generative AI and AI-assisted technologies in the
manuscript preparation process}
During the preparation of this work, OpenAI GPT-6 was used to revise the
language, to assist with checking and revising proofs. The authors are responsible for verifying the mathematical arguments and for the content of the article.

\clearpage
\begingroup
\small
\setlength{\bibsep}{1pt}
\bibliographystyle{abbrvnat}
\bibliography{alpha_clique_tensor}
\endgroup

\end{document}